\theoremstyle{definition}
\newtheorem{theorem}{Theorem}
\newtheorem{lemma}[theorem]{Lemma}
\newtheorem{corollary}[theorem]{Corollary}
\newtheorem{proposition}[theorem]{Proposition}
\title{On the first $k$ moments of the random count of a pattern in a
  multi-states sequence generated by a Markov
  source} % insert title - use \\ if it requires more than one line.
\date{}
\author{G. Nuel$^{1,2}$\\
\begin{minipage}{0.8\textwidth}
\vspace*{1cm}
{\small $^1$ CNRS, French National Center for Scientific Research, Paris, France}\\
{\small $^2$ Department of Applied Mathematics, University Paris Descartes, France}\\
\end{minipage}
}
\begin{document}

\maketitle

\begin{abstract}
In this paper, we develop an explicit formula allowing to
    compute the first $k$ moments of the random count of a pattern in
    a multi-states sequence generated by a Markov source. We derive
    efficient algorithms allowing to deal both with low or high
    complexity patterns and either homogeneous or heterogenous Markov
    models. We then apply these results to the distribution of DNA
    patterns in genomic sequences where we show that moment-based
    developments (namely: Edgeworth's expansion and Gram-Charlier type
    B series) allow to improve the reliability of common asymptotic
    approximations like Gaussian or Poisson approximations.
\end{abstract}

\section{Introduction}

The distribution of pattern counts in random sequence generated by
Markov source have many applications in a wide range of fields
including: reliability, insurance, communication systems, pattern
matching, or bioinformatics. In this particular field, a common
application is the statistical detection of pattern of interest in
biological sequences like DNA or proteins. Such approaches have
successfully led both to the confirmation of known biological signals
(PROSITE signatures, CHI motifs , etc.) as well as the identification
of new functional patterns (regulatory motifs in upstream regions,
binding sites, etc.). Here follows a short selection of such work:
\cite{KBC92,Hel98,BJVU98,KBSG99,BFWCG00,Spouge02,HKB02,Spouge04}.

From the statistical point of view, studying the distribution of the
random count of a pattern (simple or complex) in a multi-states Markov
chain is a difficult problem. A great deal of efforts have been spent
on this problem in the last fifty years with many concurrent
approaches and we give here only few references (see
\cite{reignier,Lot05,nuel} for more comprehensive reviews). Exact
methods are based on a wide range of techniques like Markov chain
embedding, moment generating functions, combinatorial methods, or
exponential families
\cite{Fu96,StP97,Ant01,Cha05,boeva,Nue06,StS07,boeva2}. There is also
a wide range of asymptotic approximations, the most popular among them
being: Gaussian approximations \cite{PBM89,Cow91,KlB97,Pru95}, Poisson
approximations \cite{God91,GGSS95,ReS99,Erh00} and Large deviations
approximations \cite{DRV01,Nue04}.

More recently, the connexion between this problem and the pattern
matching theory have been pointed out by several authors
\cite{nicodeme,stefanov,lladser,Nue08,ribeca}. Thanks to these
approaches, it is now possible to obtain an optimal Markov chain
embedding of any pattern problem through minimal Deterministic Finite
state Automata (DFA). In this paper, we want to apply this technique
to the exact computation of the first $k$ moments of a pattern count
in a random sequence generated by a Markov source. Our aim is to
provide efficient algorithms to perform these computations both for
low and high complexity patterns and either considering homogeneous
Markov model or heterogeneous ones.

The paper is organized as follow. In a first part, we recall the
principles of optimal Markov chain embedding through DFA. We then
derive from the moment-generating function of the random pattern count
a new expression for its first $k$ moments, and introduce three
different algorithms to compute it. The relative complexity of these
algorithms in respect with previous approaches are then
discussed. Finally, we apply Edgeworth's expansion and Gram-Charlier
type B series techniques to obtain near Gaussian or near Poisson
approximations and show how this allows to improve the reliability of
classical asymptotic approximations with a modest additional cost.

\section{DFA and optimal Markov chain embedding}

\subsection{Sequence model}

Let $(X_i)_{1 \leqslant i \leqslant \ell}$ be a order $d\geqslant 0$
Markov chain over the cardinal $s \geqslant 2$ alphabet
$\mathcal{A}$. For all $1 \leqslant i \leqslant j \leqslant \ell$, we
denote by $X_i^j\stackrel{\text{def}}{=}X_i\ldots X_j$ the subsequence
between positions $i$ and $j$. For all
$a_1^d\stackrel{\text{def}}{=}a_1\ldots a_d \in \mathcal{A}^d$, $b \in
\mathcal{A}$, and $1 \leqslant i \leqslant \ell-d$, let us denote by
$\nu\left(a_1^d\right)\stackrel{\text{def}}{=}\mathbb{P}\left(X_1^d=a_1^d\right)$
the starting distribution and by
$\pi_{i+d}(a_1^{d},b)\stackrel{\text{def}}{=}\mathbb{P}(X_{i+d}=b |
X_i^{i+d-1}=a_1^d)$ the transition probability towards $X_{i+d}$.

\subsection{Pattern count}

Let $\mathcal{W}$ be a finite set of words (for simplification purpose, we assume that
  $\mathcal{W}$ contains no word of length smaller or equal to
  $d$) over
$\mathcal{A}$. We consider the random number $N$ of matching position of
$\mathcal{W}$ in $X_1^\ell$ defined by
\begin{equation}
N
\stackrel{\text{def}}{=} \sum_{i=1}^\ell \mathbb{I}_{\{\mathcal{W} \cap
\mathcal{S}(X_1^i) \neq \emptyset\}}
\end{equation}
where $\mathcal{S}(X_1^i)$ is the set of all the suffixes of $X_1^i$
and where $\mathbb{I}_A$ is the indicatrix function of event $A$.

\subsection{DFA}

As suggested in
\cite{nicodeme,stefanov,lladser,Nue08}, we perform a
optimal Markov chain embedding of the problem through a DFA. We use
here the notations of \cite{Nue08}. Let
$(\mathcal{A},\mathcal{Q},\sigma,\mathcal{F},\delta)$ be a
\emph{minimal} DFA that recognize the language
$\mathcal{A}^*\mathcal{W}$ ($\mathcal{A}^*$ denote the
set of all -- possibly empty -- texts over $\mathcal{A}$) of all texts  over
$\mathcal{A}$ ending with an occurrence of
$\mathcal{W}$. $\mathcal{Q}$ is a finite state space, $\sigma \in
\mathcal{Q}$ is the starting state, $\mathcal{F} \subset \mathcal{Q}$
is the subset of final states, and $\delta: \mathcal{Q} \times
\mathcal{A} \rightarrow \mathcal{Q}$ is the transition function. We
recursively extend the definition of $\delta$ over $\mathcal{Q} \times
\mathcal{A^*}$ thanks to the relation
$\delta(p,aw)\stackrel{\text{def}}{=}\delta(\delta(p,a),w)$ for all $p
\in \mathcal{Q},a\in\mathcal{A},w \in \mathcal{A}^*$. We additionally
suppose that this automaton is non $d$-ambiguous (a DFA
  having this property is also called a $d$-th order DFA in \cite{lladser}) which means that for all $q \in \mathcal{Q}$,
$\delta^{-d}(p)\stackrel{\text{def}}{=}\left\{a_1^d \in
  \mathcal{A}_1^d, \exists p \in \mathcal{Q},
  \delta\left(p,a_1^d\right)=q\right\}$ is either a singleton, or the
empty set. When the notation is not ambiguous, $\delta^{-d}(p)$ may
also denotes its unique element (singleton case).

\subsection{Markov chain embedding}

\begin{theorem}
  We consider the random sequence over $\mathcal{Q}$ defined by
  $\widetilde{X}_0\stackrel{\text{def}}{=}\sigma$ and
  $\widetilde{X}_i\stackrel{\text{def}}{=}\delta(\widetilde{X}_{i-1},X_i)$
  $\forall i,1 \leqslant i \leqslant \ell$. Then $(\widetilde{X}_i)_{i
    \geqslant d}$ is a heterogeneous order 1 Markov chain over
  $\mathcal{Q}'\stackrel{\text{def}}{=}\delta(s,\mathcal{A}^d\mathcal{A}^*)$
   such as, for all
  $p,q \in \mathcal{Q}'$ and $1 \leqslant i\leqslant \ell-d$ the
  starting distribution $\mu_d(p)\stackrel{\text{def}}{=}\mathbb{P}\left(
    \widetilde{X}_d=p \right)$ and the transition matrix
  $T_{i+d}(p,q)\stackrel{\text{def}}{=}\mathbb{P}\left(
    \widetilde{X}_{i+d}=q | \widetilde{X}_{i+d-1}=p \right)$ are given
  by:
\begin{equation}
\mu_d(p)=\left\{
\begin{array}{ll}
  \nu\left(\delta^{-d}(p) \right) & \text{if $\delta^{-d}(p)\neq\emptyset$}\\
  0 & \text{else}\\
\end{array}
\right.;
\end{equation}
\begin{equation}
T_{i+d}(p,q)=\left\{
\begin{array}{ll}
  \pi_{i+d}\left(\delta^{-d}(p),b\right)
  & \text{if $\exists b \in \mathcal{A},\delta(p,b)=q$}\\
  0 & \text{else}
\end{array}
\right..
\end{equation}
\end{theorem}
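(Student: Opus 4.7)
The plan is to exploit non-$d$-ambiguity as a ``decoder'': the current automaton state encodes exactly the last $d$ letters of the underlying sequence $X$, so transitions in $(\widetilde{X}_i)$ can be translated into transitions of the order-$d$ chain $(X_i)$.

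First I would establish, by a trivial induction on $i$, that $\widetilde{X}_i = \delta(\sigma,X_1^i)$ for every $i$, and in particular $\widetilde{X}_i \in \mathcal{Q}'$ for $i \geqslant d$. For the initial distribution, observe that for $p \in \mathcal{Q}'$ the event $\{\widetilde{X}_d = p\}$ is $\{\delta(\sigma,X_1^d)=p\}$, i.e.\ $X_1^d$ belongs to the set of length-$d$ words sent to $p$ by $\delta(\sigma,\cdot)$. By non-$d$-ambiguity this set is either empty or the singleton $\{\delta^{-d}(p)\}$, which immediately yields the stated formula for $\mu_d(p)$.

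For the Markov structure the main claim is that, for all admissible past states $p_d,\ldots,p_{i+d-2}$,
\begin{equation*}
\mathbb{P}\bigl(\widetilde{X}_{i+d}=q \,\big|\, \widetilde{X}_d=p_d,\ldots,\widetilde{X}_{i+d-1}=p\bigr) = T_{i+d}(p,q).
\end{equation*}
The argument I would use has two ingredients. On the one hand, $\widetilde{X}_{i+d}=\delta(p,X_{i+d})$, so one only needs the conditional law of $X_{i+d}$ given the past. On the other hand, conditioning on $\widetilde{X}_{i+d-1}=p$ forces $X_i^{i+d-1}=\delta^{-d}(p)$ by non-$d$-ambiguity (so this last-$d$-letter information is already carried by $p$ alone and adding the earlier $\widetilde{X}_j$'s brings nothing new). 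The order-$d$ Markov assumption on $(X_i)$ then gives that $X_{i+d}$ has conditional distribution $\pi_{i+d}(\delta^{-d}(p),\cdot)$, independent of the earlier past.

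It remains to convert this into the piecewise formula for $T_{i+d}(p,q)$. The event $\{\delta(p,X_{i+d})=q\}$ equals $\bigcup_{b:\,\delta(p,b)=q}\{X_{i+d}=b\}$; if no such $b$ exists we get $0$, otherwise I would argue that $b$ is in fact unique, by contradiction: two distinct letters $b\neq b'$ with $\delta(p,b)=\delta(p,b')=q$ would, together with $\delta^{-d}(p)=a_1\ldots a_d$, produce two distinct words $a_2\ldots a_d b$ and $a_2\ldots a_d b'$ in $\delta^{-d}(q)$, contradicting non-$d$-ambiguity. This collapses the union to a single term $\pi_{i+d}(\delta^{-d}(p),b)$. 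The main obstacle is really this uniqueness step and, more generally, being careful that $\delta^{-d}$ is used only on states of $\mathcal{Q}'$ where it is a genuine singleton; the rest is bookkeeping.
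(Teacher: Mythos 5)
Your proof is correct, and in fact it supplies the argument that the paper omits entirely: the paper's own ``proof'' merely declares the result immediate from the properties of the DFA and defers to the cited references, and the route you take --- (a) $\widetilde{X}_i=\delta(\sigma,X_1^i)$ by induction, (b) using non-$d$-ambiguity to decode $X_i^{i+d-1}=\delta^{-d}(p)$ from the state $p$ alone so that the order-$d$ Markov property collapses the conditioning on the whole past, and (c) uniqueness of the letter $b$ with $\delta(p,b)=q$ --- is exactly the standard argument those references carry out. One small caveat: your uniqueness step, which exhibits the two distinct words $a_2\ldots a_d b$ and $a_2\ldots a_d b'$ in $\delta^{-d}(q)$, requires $d\geqslant 1$; for $d=0$ the non-$d$-ambiguity hypothesis is vacuous, several letters may lead from $p$ to $q$, and the transition probability must then be read as the sum $\sum_{b:\,\delta(p,b)=q}\pi_{i+d}(b)$ --- this is an edge case of the theorem's statement rather than a defect of your argument, but it is worth recording where your proof uses $d\geqslant 1$.
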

\begin{proof}
  The result is immediate considering the properties of the DFA. See
  \cite{lladser} or \cite{Nue08} for more details.
\end{proof}

\subsection{Moment generating function}

\begin{corollary}\label{cor:mgf}
  The moment generating function $f(y)$ of $N$ is given by:
\begin{equation}\label{eq:mgf_hetero}
f(y)\stackrel{\text{def}}{=}\sum_{n=0}^{+\infty} \mathbb{P}\left(N=n \right)y^n = \mu_d
\left(\prod_{i=1}^{\ell-d} \left(P_{i+d} + y Q_{i+d}\right)\right)
\mathbf{1}
\end{equation}
where $\mathbf{1}$ is a column vector of ones (in the same manner, we denote by $\mathbf{0}$ is a column vector of zeros) and where, for all $1
\leqslant i \leqslant \ell-d$, $T_{i+d}=P_{i+d}+Q_{i+d}$ with
$P_{i+d}(p,q)\stackrel{\text{def}}{=}\mathbb{I}_{q\notin\mathcal{F}}T_{i+d}(p,q)$
and
$Q_{i+d}(p,q)\stackrel{\text{def}}{=}\mathbb{I}_{q\in\mathcal{F}}T_{i+d}(p,q)$
for all $p,q \in \mathcal{Q}'$.
\end{corollary}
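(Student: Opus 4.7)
The plan is to start from the definition $f(y)=\mathbb{E}[y^N]$ and translate $N$ into the language of the embedded chain $(\widetilde{X}_i)_{i\geqslant d}$ introduced in the previous theorem. Since $\mathcal{W}$ contains no word of length at most $d$, an occurrence of $\mathcal{W}$ ends at position $i$ if and only if $\widetilde{X}_i\in\mathcal{F}$, and this can only happen for $i\geqslant d+1$. Therefore
\begin{equation*}
N=\sum_{i=d+1}^{\ell}\mathbb{I}_{\widetilde{X}_i\in\mathcal{F}}
=\sum_{j=1}^{\ell-d}\mathbb{I}_{\widetilde{X}_{j+d}\in\mathcal{F}},
\end{equation*}
so that $y^N=\prod_{j=1}^{\ell-d}y^{\mathbb{I}_{\widetilde{X}_{j+d}\in\mathcal{F}}}$.

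Next I would condition on the full trajectory $(q_d,q_{d+1},\ldots,q_\ell)\in(\mathcal{Q}')^{\ell-d+1}$. Using the Markov property from the previous theorem, each such trajectory carries probability $\mu_d(q_d)\prod_{j=1}^{\ell-d}T_{j+d}(q_{j+d-1},q_{j+d})$, and the weight $y^N$ factorises along the steps. The key algebraic observation is that, by the very definitions of $P_{i+d}$ and $Q_{i+d}$, for every $p,q\in\mathcal{Q}'$ one has
\begin{equation*}
T_{i+d}(p,q)\,y^{\mathbb{I}_{q\in\mathcal{F}}}
=P_{i+d}(p,q)+y\,Q_{i+d}(p,q),
\end{equation*}
because the two indicators $\mathbb{I}_{q\notin\mathcal{F}}$ and $\mathbb{I}_{q\in\mathcal{F}}$ are complementary and disjointly split $T_{i+d}(p,q)$.

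Plugging this identity into the conditional expectation and exchanging the finite sums with the product yields
\begin{equation*}
f(y)=\sum_{q_d,\ldots,q_\ell}\mu_d(q_d)\prod_{j=1}^{\ell-d}\bigl(P_{j+d}+yQ_{j+d}\bigr)(q_{j+d-1},q_{j+d}),
\end{equation*}
which is precisely the matrix product $\mu_d\bigl(\prod_{i=1}^{\ell-d}(P_{i+d}+yQ_{i+d})\bigr)\mathbf{1}$ once the telescoping sum over intermediate states is recognised as a row-by-column product and the final sum over $q_\ell$ is absorbed into the vector $\mathbf{1}$.

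I do not expect any genuine obstacle: the only point deserving care is the index alignment (the product runs over $\ell-d$ factors matching the $\ell-d$ transitions of $\widetilde{X}$ after position $d$, not $\ell$), and the justification that the starting state $\widetilde{X}_d$ is correctly distributed according to $\mu_d$, both of which are supplied by the preceding Markov chain embedding theorem. The rest is formal expansion of a product of sums and a reinterpretation as matrix multiplication.
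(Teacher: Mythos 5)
Your proof is correct and is essentially the argument the paper sketches: the paper's own proof consists of the observation that attaching a dummy variable $y$ to the counting transitions (those landing in $\mathcal{F}$, i.e. the entries of $Q_{i+d}$) and summing the resulting marginal over final states yields the stated matrix product, which is exactly what your trajectory expansion and the identity $T_{i+d}(p,q)\,y^{\mathbb{I}_{q\in\mathcal{F}}}=P_{i+d}(p,q)+yQ_{i+d}(p,q)$ make precise. You have simply written out in full the details that the paper delegates to its references.
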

\begin{proof}
  Since $Q_{i+d}$ contains all counting transitions, we keep track of
  the number of occurrence by associating a dummy variable $y$ to
  these transitions. We hence just have to compute the marginal
  distribution at the end of the sequence and sum up the contribution
  of each state.  See
  \cite{nicodeme,stefanov,lladser,Nue08} for more
  details.
\end{proof}

\begin{corollary}\label{cor:mgf_homo}
  In the particular case where $(X_i)_{1 \leqslant i \leqslant \ell}$
  is a homogeneous Markov chain we can drop the indices in $P_{i+d}$
  and $Q_{i+d}$ and Equation (\ref{eq:mgf_hetero}) simplifies
  into
  \begin{equation}\label{eq:mgf_homo}
    f(y) = \mu_d
\left(P + y Q\right)^{\ell-d}
\mathbf{1}.
  \end{equation}
\end{corollary}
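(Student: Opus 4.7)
The plan is to derive this corollary as a direct specialization of Corollary \ref{cor:mgf}. The only thing that changes between the heterogeneous and the homogeneous setting is that the conditional transition probabilities $\pi_{i+d}(a_1^d,b)$ no longer depend on the position $i$; writing simply $\pi(a_1^d,b)$, the expression for $T_{i+d}(p,q)$ given in the Markov chain embedding theorem then depends on $p$ and $q$ only, so $T_{i+d}=T$ for all $i$. Since $P_{i+d}$ and $Q_{i+d}$ are built from $T_{i+d}$ by pointwise multiplication with the position-independent indicators $\mathbb{I}_{q\notin\mathcal{F}}$ and $\mathbb{I}_{q\in\mathcal{F}}$, they are also position-independent, and we may write $P_{i+d}=P$ and $Q_{i+d}=Q$ with $T=P+Q$.

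Once the indices are dropped, the product $\prod_{i=1}^{\ell-d}(P_{i+d}+yQ_{i+d})$ in Equation (\ref{eq:mgf_hetero}) collapses into the $(\ell-d)$-th power of the constant matrix $(P+yQ)$, which yields exactly Equation (\ref{eq:mgf_homo}) after pre- and post-multiplication by $\mu_d$ and $\mathbf{1}$. There is no real obstacle here; the entire argument is a verification that homogeneity of $(X_i)$ transfers to the embedded chain on $\mathcal{Q}'$, which is clear from the formula for $T_{i+d}$ in the embedding theorem. I would therefore state the proof in one or two sentences, citing Corollary \ref{cor:mgf} and the definitions of $P_{i+d}$, $Q_{i+d}$ to justify the index-dropping.
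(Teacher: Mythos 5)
Your proposal is correct and matches the paper's (implicit) reasoning: the paper states this corollary without proof, treating it as the immediate specialization of Corollary \ref{cor:mgf} obtained by dropping the position indices, which is exactly the verification you spell out. Nothing is missing.
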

%\begin{proof}
%  Immediate.
%\end{proof}

Corollary \ref{cor:mgf_homo} can be found explicitely in \cite{lladser} or \cite{ribeca}
but its (however straightforward) generalization to heterogenous model
(Corollary \ref{cor:mgf}) appears to be a new result.

\section{Main result}\label{section:main}

\begin{lemma}
  For all $k \geqslant 0$ we have
  \begin{equation}\label{eq:Fk}
  f^{(k)}(y)=k ! \mu_d \left( \sum_{1 \leqslant i_1 < \ldots < i_k \leqslant \ell-d}
  A_{i,\{i_1,\ldots,i_k\}}(y) \right) \mathbf{1}
  \end{equation}
  where for all $I \subset \mathbb{N}$, $A_{i,I}(y)=P_{i+d} +yQ_{i+d}$
  if $i \notin I$ and $A_{i,I}(y)=Q_{i+d}$ if $i \in I$.
\end{lemma}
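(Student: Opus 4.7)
The plan is to differentiate the matrix product form of $f(y)$ supplied by Corollary~\ref{cor:mgf} using the generalized Leibniz rule, and to exploit the fact that every factor in that product is affine in $y$. Writing $M_i(y)\stackrel{\text{def}}{=}P_{i+d}+yQ_{i+d}$ and $M(y)\stackrel{\text{def}}{=}\prod_{i=1}^{\ell-d}M_i(y)$, equation (\ref{eq:mgf_hetero}) becomes $f(y)=\mu_d M(y)\mathbf{1}$. Since $\mu_d$ and $\mathbf{1}$ are constant in $y$, the whole task reduces to the computation of $M^{(k)}(y)$, because $f^{(k)}(y)=\mu_d M^{(k)}(y)\mathbf{1}$.

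Next, I would invoke the general Leibniz rule for products of matrix-valued functions, which holds verbatim provided the left-to-right order of the factors is preserved (the only delicate point, since the $M_i$ do not commute in general):
\begin{equation*}
M^{(k)}(y)=\sum_{\substack{r_1+\cdots+r_{\ell-d}=k\\ r_i\geqslant 0}}\binom{k}{r_1,\ldots,r_{\ell-d}}\prod_{i=1}^{\ell-d}M_i^{(r_i)}(y).
\end{equation*}
The key simplification comes from the affine structure: $M_i'(y)=Q_{i+d}$ and $M_i^{(r)}(y)=\mathbf{0}$ for every $r\geqslant 2$. Consequently only the tuples $(r_1,\ldots,r_{\ell-d})\in\{0,1\}^{\ell-d}$ with $\sum_i r_i=k$ contribute, and these are in canonical bijection with the $k$-element subsets $I=\{i_1<\cdots<i_k\}\subset\{1,\ldots,\ell-d\}$. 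For each such tuple the multinomial coefficient collapses to $k!/(1!)^{k}(0!)^{\ell-d-k}=k!$, which will be pulled out of the sum to produce the $k!$ prefactor of (\ref{eq:Fk}).

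To conclude, I would identify the surviving product for a fixed $I$: its $i$-th factor is $M_i^{(0)}(y)=P_{i+d}+yQ_{i+d}$ when $i\notin I$ and $M_i^{(1)}(y)=Q_{i+d}$ when $i\in I$, which is precisely $A_{i,I}(y)$. Reinstating $\mu_d$ on the left and $\mathbf{1}$ on the right then yields Equation (\ref{eq:Fk}) exactly. The argument has no real obstacle; the only item requiring vigilance is preserving the order of factors throughout the Leibniz expansion, after which everything is combinatorial bookkeeping driven by the affine shape of each $M_i$.
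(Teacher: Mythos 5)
Your proof is correct, and it reaches the result by a somewhat different route than the paper. The paper proceeds by induction on $k$: the case $k=0$ is immediate, and the inductive step differentiates Equation (\ref{eq:Fk}) once more, using the identity $A'_{i,I}(y)=\sum_{j\notin I}A_{i,I\cup\{j\}}(y)$ and then observing that each $(k+1)$-subset $I$ arises from exactly $k+1$ of the $k$-subsets $I\setminus\{j\}$, which supplies the factor $k+1$ turning $k!$ into $(k+1)!$. You instead invoke the generalized (multinomial) Leibniz rule for an ordered product of non-commuting matrix-valued factors, kill all tuples containing an exponent $r_i\geqslant 2$ because each factor is affine in $y$, and read off the coefficient $k!/(1!)^k(0!)^{\ell-d-k}=k!$ in one shot. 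The two arguments encode the same combinatorics — your Leibniz formula is itself proved by exactly the induction the paper carries out — but your version makes the provenance of the $k!$ prefactor more transparent (it is literally a multinomial coefficient), whereas the paper's version builds it up incrementally via the over-counting factor $k+1$ at each stage. You are also right to flag order preservation as the one delicate point; the first-derivative product rule for ordered matrix products respects the ordering, so the multinomial Leibniz formula goes through verbatim. One cosmetic remark: your $M_i^{(r)}(y)=\mathbf{0}$ for $r\geqslant 2$ should denote the zero \emph{matrix}, whereas the paper reserves $\mathbf{0}$ for a column vector of zeros; and, like the paper's own statement, your final identification tacitly reads the summand in (\ref{eq:Fk}) as the ordered product $\prod_{i=1}^{\ell-d}A_{i,I}(y)$, which is the intended meaning.
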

\begin{proof}
  The lemma is obvious for $k=0$. We assume now that the lemma is
  true at fixed rank $k$. When derivating Equation (\ref{eq:Fk}),
%  $$
%  F^{(k+1)}(y)=\mu_d \left( \sum_{1 \leqslant i_1 < \ldots < i_k \leqslant \ell-d}
%  k ! A'_{i,\{i_1,\ldots,i_k\}}(y) \right) \mathbf{1}
%  $$
  the key is then to see that for all $I \subset \mathbb{N}$,
  $A'_{i,I}(y)=\sum_{j \notin I} A_{i,I\cup\{j\}}(y)$. For each
  configuration $I=\{i_1,\ldots,i_{k+1}\}$, it is hence obvious that
  $A_{i,I}(y)$ appears in $A'_{i,I\setminus \{j\}}$ for all $j \in
  I$. This explains the $k+1$ factor which is combined to $k!$ to
  establish the lemma at rank $k+1$.
\end{proof}

\begin{theorem}
  For all $k \geqslant 0$ we have
  \begin{equation}\label{eq:Espk}
  \mathbb{E}\left( \frac{N!}{(N-k)!} \right)=k! [ g(y) ]_{y^k}
  \quad\text{with}\quad 
  g(y)=\mu_d
\left(\prod_{i=1}^{\ell-d} \left(T_{i+d} + y Q_{i+d}\right)\right)
\mathbf{1}
  \end{equation}
  and where $[ g(y) ]_{y^k}$ denotes the coefficient of degree $k$ in
  $g(y)$.
\end{theorem}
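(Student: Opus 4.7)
The plan is to combine two elementary ingredients: the probabilistic identity relating factorial moments to derivatives of the probability generating function at $y=1$, and the explicit formula for $f^{(k)}(y)$ supplied by the preceding lemma.

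First I would recall that $f(y)=\sum_{n\geqslant 0}\mathbb{P}(N=n)y^n$ is precisely the probability generating function of $N$, so differentiating $k$ times and evaluating at $y=1$ yields
\[
f^{(k)}(1)=\sum_{n\geqslant k} n(n-1)\cdots(n-k+1)\,\mathbb{P}(N=n)=\mathbb{E}\!\left(\frac{N!}{(N-k)!}\right).
\]
Thus the task reduces to showing that $f^{(k)}(1)=k!\,[g(y)]_{y^k}$.

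Next I would specialize the previous lemma at $y=1$. By definition, $A_{i,I}(1)=P_{i+d}+Q_{i+d}=T_{i+d}$ when $i\notin I$, and $A_{i,I}(1)=Q_{i+d}$ when $i\in I$. Substituting into (\ref{eq:Fk}) gives
\[
f^{(k)}(1)=k!\,\mu_d\!\left(\sum_{1\leqslant i_1<\cdots<i_k\leqslant \ell-d}\;\prod_{i=1}^{\ell-d} B_{i,\{i_1,\ldots,i_k\}}\right)\mathbf{1},
\]
where $B_{i,I}=T_{i+d}$ if $i\notin I$ and $B_{i,I}=Q_{i+d}$ if $i\in I$.

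Finally I would extract the coefficient of $y^k$ from $g(y)$ by simply expanding the product: each factor $(T_{i+d}+yQ_{i+d})$ contributes either $T_{i+d}$ (no $y$) or $yQ_{i+d}$, and selecting exactly $k$ of the latter corresponds to choosing a subset $I=\{i_1,\ldots,i_k\}$ of $\{1,\ldots,\ell-d\}$ of size $k$. Hence
\[
[g(y)]_{y^k}=\mu_d\!\left(\sum_{1\leqslant i_1<\cdots<i_k\leqslant \ell-d}\;\prod_{i=1}^{\ell-d} B_{i,\{i_1,\ldots,i_k\}}\right)\mathbf{1},
\]
which matches the expression for $f^{(k)}(1)/k!$ obtained above. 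Combining the two identities yields the theorem. There is no real obstacle here: the lemma has already absorbed all the combinatorial work of the product rule, and the remaining step is just to recognize that evaluating $A_{i,I}$ at $y=1$ turns it into the same ``either $T_{i+d}$ or $Q_{i+d}$'' selection that arises from expanding $g(y)$ as a polynomial in $y$.
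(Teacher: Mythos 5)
Your proposal is correct and follows exactly the paper's argument: identify $\mathbb{E}[N!/(N-k)!]$ with $f^{(k)}(1)$, evaluate the preceding lemma's formula at $y=1$, and recognize the resulting sum over $k$-subsets as the coefficient of $y^k$ in the expansion of $g(y)$. You have simply written out in full detail the steps that the paper's proof states tersely.
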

\begin{proof}
  By derivating $k$ times the moment generating function $f$ we easily
  get $\mathbb{E}[N!/(N-k)!]=f^{(k)}(1)$. Expanding the expression of
  $g(y)$ at degree $k$ then allows to identify the right term in
  Equation (\ref{eq:Fk}) for $y=1$ thus proving the theorem.
\end{proof}

\begin{corollary}
  In the particular case where $(X_i)_{1 \leqslant i \leqslant \ell}$
  is a homogeneous Markov Equation (\ref{eq:Espk}) simplifies
  into
  \begin{equation}\label{eq:Espk_homo}
  \mathbb{E}\left( \frac{N!}{(N-k)!} \right)=k! [ g(y) ]_{y^k}
  \quad\text{with}\quad 
  g(y)=\mu_d
 \left(T + y Q\right)^{\ell-d}
\mathbf{1}.
  \end{equation}
\end{corollary}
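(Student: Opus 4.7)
The plan is to obtain this corollary as an immediate specialization of the preceding theorem, using Corollary \ref{cor:mgf_homo} (the homogeneous version of the moment generating function) as the underlying fact that drives the simplification. Since the result only claims a simplification of form, no new probabilistic content needs to be established; I only need to rewrite the product appearing in $g(y)$.

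First, I would invoke the theorem to write
\begin{equation*}
\mathbb{E}\!\left( \frac{N!}{(N-k)!}\right)= k!\,[g(y)]_{y^k},\qquad g(y)=\mu_d\left(\prod_{i=1}^{\ell-d}\left(T_{i+d}+yQ_{i+d}\right)\right)\mathbf{1}.
\end{equation*}
Then I would observe that under the homogeneity hypothesis on $(X_i)$, the transition probabilities $\pi_{i+d}(a_1^d,b)$ do not depend on $i$, and therefore, by the construction of $T_{i+d}$, $P_{i+d}$, $Q_{i+d}$ in Corollary \ref{cor:mgf}, neither does $T_{i+d}=T$ nor $Q_{i+d}=Q$. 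Hence every factor of the product is the same matrix $T+yQ$, and the product collapses to $(T+yQ)^{\ell-d}$, yielding Equation (\ref{eq:Espk_homo}).

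There is essentially no obstacle here: the reasoning is a one-line substitution once homogeneity is used to drop the index $i$ from $T_{i+d}$ and $Q_{i+d}$. The only item worth stating explicitly is the implication \emph{homogeneity of the underlying chain $\Rightarrow$ homogeneity of the embedded chain}, which is already built into the corollary that gave (\ref{eq:mgf_homo}) and so can simply be cited rather than re-derived.
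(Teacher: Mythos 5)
Your proof is correct and matches the paper's intent exactly: the paper states this corollary without proof, treating it as the immediate specialization you describe, where homogeneity makes $T_{i+d}=T$ and $Q_{i+d}=Q$ independent of $i$ so the product in Equation (\ref{eq:Espk}) collapses to the power $(T+yQ)^{\ell-d}$. Nothing further is needed.
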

%\begin{proof}
%  Immediate.
%\end{proof}

\section{Three algorithms}

\subsection{Full recursion}

For all  $1 \leqslant i \leqslant \ell-d$ we
consider column polynomial vector defined by
  \begin{equation}
  E_i(y)\stackrel{\text{def}}{=}\left( \prod_{j=i}^{\ell-d}  \left(T_{j+d} + y Q_{j+d}\right) \right) \mathbf{1}.
  \end{equation}
  If we denote now by $E_k(i)\stackrel{\text{def}}{=}\left[
    E_i(y)\right]_{y^k}$ its coefficient of degree $k$ for all $k
  \geqslant 0$, then it is clear that we can rewrite the expression of
  $g(y)$ in Equation (\ref{eq:Espk}) as $[g(y)]_{y^k}=\mu_d E_k(1)$.

\begin{proposition}
  We have the following results for all $1 \leqslant i \leqslant \ell-d$:
  \begin{enumerate}[i)]
  \item  $E_0(i)=\mathbf{1}$;
  \item $E_1(\ell-d)=Q_\ell \mathbf{1}$;
  \item if $k \geqslant 1$ and  $(\ell-d-i+1)<k$ then $E_k(i)=\mathbf{0}$;
  \item if $k \geqslant 1$ and $i<\ell-d$ then $E_k(i)=T_{i+d}E_k(i+1)+Q_{i+d} E_{k-1} (i+1)$.
  \end{enumerate}
\end{proposition}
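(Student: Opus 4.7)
The unifying observation is that the definition of $E_i(y)$ admits the obvious recursive factorisation $E_i(y)=(T_{i+d}+yQ_{i+d})\,E_{i+1}(y)$ for $i<\ell-d$, together with the base case $E_{\ell-d}(y)=(T_\ell+yQ_\ell)\mathbf{1}$. All four claims will follow from this identity combined with two elementary facts: each $T_{j+d}$ is a row-stochastic matrix, so $T_{j+d}\mathbf{1}=\mathbf{1}$ (this is immediate from the definition of $T_{j+d}$ in the Markov chain embedding theorem, since for each state $p$ and each letter $b\in\mathcal{A}$ the DFA has exactly one transition $\delta(p,b)$); and the polynomial $E_i(y)$ is a product of exactly $\ell-d-i+1$ factors, each of degree one in $y$.

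I would first dispatch (i) by setting $y=0$: this kills every $yQ_{j+d}$ term and leaves $E_i(0)=T_{i+d}T_{i+d+1}\cdots T_\ell\mathbf{1}$, which reduces to $\mathbf{1}$ by iterated application of $T_{j+d}\mathbf{1}=\mathbf{1}$ from right to left. Claim (ii) is then just a one-line expansion of the single-factor base case: $E_{\ell-d}(y)=T_\ell\mathbf{1}+yQ_\ell\mathbf{1}=\mathbf{1}+yQ_\ell\mathbf{1}$, so the coefficient of $y^1$ equals $Q_\ell\mathbf{1}$. Claim (iii) follows from the degree count mentioned above: $E_i(y)$ has degree at most $\ell-d-i+1$ in $y$, so any coefficient of higher order is zero.

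The only remaining item is the recurrence (iv). I would substitute the factorisation $E_i(y)=T_{i+d}E_{i+1}(y)+yQ_{i+d}E_{i+1}(y)$ and read off the coefficient of $y^k$ on both sides: the first summand contributes $T_{i+d}E_k(i+1)$ and the second contributes $Q_{i+d}E_{k-1}(i+1)$, giving exactly the stated identity. The main (very mild) obstacle is simply to state the recursive factorisation cleanly and to handle the boundary index correctly; no part of the argument requires genuine work beyond bookkeeping.
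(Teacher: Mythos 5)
Your proposal is correct and follows essentially the same route as the paper: both rest on the factorisation $E_i(y)=(T_{i+d}+yQ_{i+d})E_{i+1}(y)$, the stochasticity of the $T_{j+d}$ for (i), the degree count for (iii), and extraction of the $y^k$ coefficient for (iv). The only difference is cosmetic — you phrase (i) as evaluation at $y=0$ rather than directly identifying the degree-zero coefficient, which is the same computation.
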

\begin{proof}
  i) It is clear that $E_0(i)=(\prod_{j=1}^{\ell-d} T_{j+d})
  \mathbf{1}$ which is equal to $\mathbf{1}$ since all $T_{j+d}$ are
  stochastic matrices; ii) immediate; iii) the product must contains
  at least $k$ terms to have degree $k$ contribution; iv) is easily
  proved by recurrence using the fact that $E_i(y)=(T_{i+d}+yQ_{i+d})E_{i+1}(y)$.
\end{proof}

%\begin{algorithm}
%  \begin{algorithmic}
%    \REQUIRE{The starting distribution $\mu_d$,
%      matrices $T_{i}$ and $Q_{i}$ for all $1 \leqslant i \leqslant
%      \ell-d$, and a $k \times L$ floating point
%      array ${\tt E}$, where $L$ is the cardinal of $\mathcal{Q}'$.}
%    \STATE\COMMENT{Initialization}
%    \STATE ${\tt E[}0{\tt ]}\leftarrow \mathbf{1}$, ${\tt E[}1{\tt ]}\leftarrow {Q}_\ell \mathbf{1}$ and ${\tt E[}j{\tt ]}\leftarrow \mathbf{0}$ for $2 \leqslant j \leqslant k$.
%    \STATE\COMMENT{Recursion}
%    \FOR{$i=(\ell-d-1)..1$}
%    \FOR{$j=k..1$}
%    \STATE ${\tt E[}j{\tt ]}\leftarrow  T_{i+d} {\tt E[}j{\tt ]} + Q_{i+d}{\tt E[}j-1{\tt ]}$ 
%    \ENDFOR
%    \ENDFOR
%    \ENSURE{for all $0 \leqslant j \leqslant k$, $[g(y)]_{y^j}=\mu_d
%      {\tt E[}j{\tt ]}$}
%  \end{algorithmic}
%  \caption{Compute the $k$ first terms of $g(y)$ in the most general
%    case by performing a full recursion. The workspace complexity is
%    $O(k \times L)$ and since all matrix vector product exploit the
%    sparse structure of the matrices, the time complexity is $O(\ell
%    \times k \times s \times L)$ where $s \times L$ corresponds to the
%    maximum number of non zero terms in $T_{i+d}$.}
%\end{algorithm}

\begin{algorithm}
  \begin{algorithmic}
    \REQUIRE{The starting distribution $\mu_d$, matrices $T_{i}$ and
      $Q_{i}$ for all $1 \leqslant i \leqslant \ell-d$, and a $O(k
      \times L)$ workspace to keep the current values of $E_j(i)$
      for $0 \leqslant j \leqslant k$, where $L$ denotes the cardinal
      of $\mathcal{Q}'$.}
    \STATE\COMMENT{Initialization}
    \STATE $E_0(\ell-d)=\mathbf{1}$, $E_1(\ell-d)=Q_\ell \mathbf{1}$, and $E_j(\ell-d)=\mathbf{0}$ for $2 \leqslant j \leqslant k$.
    \STATE\COMMENT{Recursion}
    \FOR{$i=(\ell-d-1)..1$}
    \FOR{$j=k..1$}
    %\STATE ${\tt E[}j{\tt ]}\leftarrow  T_{i+d} {\tt E[}j{\tt ]} + Q_{i+d}{\tt E[}j-1{\tt ]}$ 
    \STATE $E_j(i)=T_{i+d} E_j(i+1) + Q_{i+d} E_{j-1}(i+1)$ 
    \ENDFOR
    \ENDFOR
    \ENSURE{for all $0 \leqslant j \leqslant k$, $[g(y)]_{y^j}=\mu_d
       E_j(1)$}
  \end{algorithmic}
  \caption{Compute the $k$ first terms of $g(y)$ in the most general
    case by performing a full recursion. The workspace complexity is
    $O(k \times L)$ and since all matrix vector product exploit the
    sparse structure of the matrices, the time complexity is $O(\ell
    \times k \times s \times L)$ where $s \times L$ corresponds to the
    maximum number of non zero terms in $T_{i+d}$.}\label{algo1}
\end{algorithm}

\subsection{Direct power computation}\label{section:power}

From now on, we consider the particular case where the Markov model is
homogeneous. According to Equation (\ref{eq:Espk_homo}) the expression
of $g(y)$ in such a case is then simplified into $g(y)=\mu_d
(T+yQ)^{\ell-d} \mathbf{1}$. If we denote by
$M_i(y)\stackrel{\text{def}}{=}[(T+yQ)^{i}]_{y^{0..k}}$ our problem is
then only to compute $M_{\ell-d}(y)$ since $[g(y)]_{y^j}=[\mu_d
M_{\ell-d}(y) \mathbf{1}]_{y^j}$ for all $0 \leqslant j \leqslant
k$. 

\begin{proposition}
We have
\begin{equation}\label{eq:power}
M_{\ell-d}(y)= \prod_{j=0}^J M_{2^j}(y)^{\mathbb{I}_{\{a_j=1\}}} %\right]_{y^{0..k}}
\end{equation}
where $\ell-d = a_0 2^0 + a_1 2^1
+\ldots + a_J 2^J$ with $a_j \in \{0,1\}$ for $0 \leqslant j \leqslant J \stackrel{\text{def}}{=}\lfloor \log_2 (\ell-d)
\rfloor$ ($\forall x \in \mathbb{R}$, $\lfloor x \rfloor$
  denotes the largest integer smaller than $x$).
\end{proposition}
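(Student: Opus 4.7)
The plan is to combine the binary expansion of $\ell-d$ with the elementary fact that truncation at degree $k$ is a ring homomorphism on $\mathbb{R}[y]/(y^{k+1})$, so it commutes with matrix products up to higher-order terms. All computations must be understood modulo $y^{k+1}$, since $M_{\ell-d}(y)$ is by definition truncated at degree $k$ while the right-hand side of \eqref{eq:power} is a product of truncated polynomial matrices that could a priori produce terms of higher degree.

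First I would write $(T+yQ)^{\ell-d}$ without any truncation using the binary decomposition $\ell-d = \sum_{j=0}^J a_j 2^j$, which gives the purely algebraic identity
\begin{equation*}
(T+yQ)^{\ell-d} = \prod_{j=0}^J \bigl((T+yQ)^{2^j}\bigr)^{\mathbb{I}_{\{a_j=1\}}}
\end{equation*}
in the ring of polynomial matrices (using the convention that the factor is the identity matrix when $a_j=0$). This is just the usual fast exponentiation identity and requires no new argument.

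Next I would establish the key truncation lemma: for any two polynomial matrices $A(y)$ and $B(y)$, the coefficients of $A(y)B(y)$ of degree at most $k$ depend only on the coefficients of $A$ and $B$ of degree at most $k$. Indeed, a product $A_i B_{k'}$ with $i+k' \leqslant k$ forces $i \leqslant k$ and $k' \leqslant k$. Consequently $[A(y)B(y)]_{y^{0..k}} = \bigl[[A(y)]_{y^{0..k}} \, [B(y)]_{y^{0..k}}\bigr]_{y^{0..k}}$, and by a straightforward induction the same holds for a product of any finite number of factors.

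Applying this truncation lemma to the binary-expansion identity above, I can replace each factor $(T+yQ)^{2^j}$ by its truncation $M_{2^j}(y) = [(T+yQ)^{2^j}]_{y^{0..k}}$ without changing the coefficients of degree at most $k$ of the overall product. Taking the truncation at degree $k$ of both sides then yields
\begin{equation*}
M_{\ell-d}(y) = \Bigl[\prod_{j=0}^J M_{2^j}(y)^{\mathbb{I}_{\{a_j=1\}}}\Bigr]_{y^{0..k}},
\end{equation*}
which is exactly \eqref{eq:power} once we agree that the right-hand side is read modulo $y^{k+1}$. There is no genuine obstacle here; the only subtle point, which I would make explicit, is the interpretation of equality in the truncated polynomial ring so that the formula makes sense as written.
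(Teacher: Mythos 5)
Your proposal is correct and follows the only natural route: the fast-exponentiation identity for $(T+yQ)^{\ell-d}$ combined with the observation that truncation at degree $k$ commutes with matrix-polynomial products. The paper's own proof is simply ``Immediate,'' so you have supplied the details it omits, including the worthwhile clarification that the stated equality must be read modulo $y^{k+1}$ (consistent with the $\tau_k$ truncations used in Algorithm~2).
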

\begin{proof}
  Immediate.
\end{proof}

Since we only need to compute the terms of degree smaller than $k$ in
$M_{\ell-d}(y)$ to obtain the first $k$ moments of $N$, we can speed
up the computation by ignoring terms of degree greater than $k$ in
Equation (\ref{eq:power}). We hence obtain Algorithm \ref{algo2} where
$\tau_k[p(y)]$ denotes the truncated polynomial obtained from $p(y)$ by dropping all
terms of degree greater than $k$.

\begin{algorithm}
  \begin{algorithmic}
    \REQUIRE{The starting distribution $\mu_d$, matrices $T$ and
      $Q$, $\ell$, $d$, and $O(k \times L^2 \times J)$ for $M_{2^j}(y)$ for $0\leqslant j \leqslant J$ and a polynomial matrix $M(y)$.}
    \STATE \COMMENT{Preliminary computations}
    \STATE perform the binary decomposition $\ell-d= a_0 2^0 + \ldots a_J 2^J$
    \STATE $M_{2^0}(y)=(P+yQ)^1$
    \FOR{$j=1..J$}
    \STATE $M_{2^{j}}(y) = \tau_k \left[ M_{2^{j-1}}(y)^2 \right]$
    \ENDFOR
    \STATE \COMMENT{Computing $M_{\ell-d}(y)$}
    \STATE $M(y)=M_0(y)$
    \FOR{$j=0..J$}   
    \STATE if $a_j=1$ then $M(y) = \tau_k \left[ M(y) \times M_{2^j}(y) \right]$
    \ENDFOR
    \ENSURE{for all $0 \leqslant j \leqslant k$, $[g(y)]_{y^j}=[\mu_d
       M_{\ell-d}(y) \mathbf{1}]_{y^j}$}
  \end{algorithmic}
  \caption{Compute the $k$ first terms of $g(y)$ in the particular
    case of a homogeneous Markov model through a direct power
    computation. The workspace complexity is $O(k \times L^2 \times
    \log_2 \ell)$ and the time complexity is $O(k^2 \times L^3 \times
    \log_2 \ell)$ ($k^2$ for the polynomial products and $L^3$
    for the matrix products).}\label{algo2}
\end{algorithm}

\subsection{Partial recursion}

In this particular section, we assume that $T$ is an irreducible and
aperiodic matrix and we denote by $\nu$ the magnitude of its second
eigenvalue when we order them by decreasing magnitude.

For all $i\geqslant 0$ we consider the polynomial vector
$F_i(y)\stackrel{\text{def}}{=}(T+yQ)^i \mathbf{1}$, and for all $k
\geqslant 0$ we denote by
$F_k(i)\stackrel{\text{def}}{=}[F_i(y)]_{y^k}$ the term of degree $k$
in $F_i(y)$. By convention, $F_k(i)=\mathbf{0}$ if $i<0$. It is then
possible to rewrite the expression of $g(y)$ in Equation
(\ref{eq:Espk_homo}) as $[g(y)]_{y^k}=\mu_d
F_k(\ell-d)$. Additionnaly, let us finally define recursively the
quantity $D_j^k(i)$ for all $k,i,j \geqslant 0$ by
$D_k^0(i)\stackrel{\text{def}}{=}F_k(i)$ and, if $i \geqslant 1$ and
$j \geqslant 1$,
$D_k^{j}(i)\stackrel{\text{def}}{=}D_k^{j-1}(i)-D_k^{j-1}(i-1)$ so
that
\begin{equation}\label{eq:Dkj}
D_k^j(i)=\sum_{\delta=0}^{j} (-1)^\delta {j \choose \delta} F_k(i-\delta).
\end{equation}

\begin{lemma}\label{lemma:partial}
  We have the following initial conditions:
  \begin{enumerate}[i)]
  \item $\forall i \geqslant 0$, $D_0^0(i)=\mathbf{1}$
  \item $\forall j \geqslant 1$, $D_0^j(i)=(-1)^i {j-1 \choose i}\mathbf{1}$ if $0 \leqslant i \leqslant j-1$, and $D_0^j(i)=\mathbf{0}$ if $i \geqslant j$
  \item $\forall k \geqslant 1$, $D_k^0(0)=\mathbf{0}$, and $D_k^0(i)=T D_k^0(i-1)+Q D_{k-1}^{0}(i-1)$ for $i \geqslant 1$.
  \end{enumerate}
  And for all $k,j,i \geqslant 1$ we have the following recurrence relations:
  \begin{enumerate}[a)]
  \item $D_k^{j}(i)=D_k^{j-1}(i)-D_k^{j-1}(i-1)$
  \item $D_k^j(i)=T D_k^j(i-1)+Q D_{k-1}^{j}(i-1)$
  \end{enumerate}
%  \item $F_k(i)=\mathbf{0}$ if $i <k$ and $F_k(k)=Q^k \mathbf{1}$;
%  \item $F_0(i)=\mathbf{1}$ if $i \geqslant 0$, and if $i,k \geqslant 1$ $F_k(i)=T F_k(i-1) + Q F_{k-1}(i-1)$;
%  \item $D_k^j(j)=\mathbf{0}$ if $j<k$, and $D_k^k(k)=Q^k \mathbf{1}$;
%  \item $D_0^j(i)=\mathbf{0}$ if $i \geqslant j \geqslant 1$, and if $i,k
%    \geqslant 1$ $D_k^j(i)=T D_k^j(i-1)+Q D_{k-1}^{j}(i-1)$.
%  \end{enumerate}
\end{lemma}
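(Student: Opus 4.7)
The plan is to verify the three initial conditions (i)--(iii) directly from the definitions of $F_k(i)$ and $D_k^j(i)$, then note that (a) is nothing but the defining recurrence of $D_k^j$, and finally obtain (b) by induction on $j$ using (iii) as the base case.

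For (i), the definition gives $D_0^0(i)=F_0(i)=[(T+yQ)^i\mathbf{1}]_{y^0}=T^i\mathbf{1}$, which equals $\mathbf{1}$ because $T$ is stochastic. For (iii), writing $(T+yQ)^i=(T+yQ)(T+yQ)^{i-1}$ and extracting the coefficient of $y^k$ on both sides of $F_i(y)=(T+yQ)F_{i-1}(y)$ gives $F_k(i)=TF_k(i-1)+QF_{k-1}(i-1)$; the boundary case $F_k(0)=[\mathbf{1}]_{y^k}=\mathbf{0}$ for $k\geqslant 1$ is immediate. For (ii), I would apply formula (\ref{eq:Dkj}) with $k=0$:
\begin{equation}
D_0^j(i)=\sum_{\delta=0}^{j}(-1)^\delta \binom{j}{\delta} F_0(i-\delta),
\end{equation}
and use the convention $F_0(i-\delta)=\mathbf{0}$ when $i-\delta<0$ together with (i). When $i\geqslant j$ all terms equal $\mathbf{1}$ and the full alternating sum vanishes. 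When $0\leqslant i\leqslant j-1$, only indices $\delta\leqslant i$ contribute, giving a partial alternating sum $\sum_{\delta=0}^{i}(-1)^\delta\binom{j}{\delta}\mathbf{1}$, which collapses to $(-1)^i\binom{j-1}{i}\mathbf{1}$ by the classical hockey-stick-type identity (easily proved by induction on $i$ using Pascal's rule).

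For (a), the equality is the very definition of $D_k^j(i)$. For (b), I proceed by induction on $j$: the base $j=0$ is exactly (iii), and for the inductive step I write, using (a),
\begin{equation}
D_k^j(i)=D_k^{j-1}(i)-D_k^{j-1}(i-1),
\end{equation}
apply the induction hypothesis (b) at rank $j-1$ to both right-hand terms, then factor out $T$ and $Q$ and recognize the two resulting differences as $D_k^j(i-1)$ and $D_{k-1}^j(i-1)$ respectively by (a). The only mildly delicate step I anticipate is the combinatorial identity needed for (ii); the rest is linear-algebraic bookkeeping that follows transparently from the bilinearity of the operator $(T+yQ)$ in its two components $T$ and $Q$.
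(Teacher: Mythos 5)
Your proof is correct and follows essentially the same route as the paper's (very terse) proof, filling in the details it omits: the alternating partial-sum identity $\sum_{\delta=0}^{i}(-1)^\delta\binom{j}{\delta}=(-1)^i\binom{j-1}{i}$ for (ii), and the induction on $j$ combining (a) with the base case (iii) for (b). The only point needing a word of care is the boundary case $i=1$ in your inductive step for (b), where you apply the hypothesis to $D_k^{j-1}(i-1)=D_k^{j-1}(0)$; this is covered by the paper's convention $F_k(i)=\mathbf{0}$ for $i<0$, which extends Equation (\ref{eq:Dkj}) --- and hence relations (a) and (b) --- down to $i=0$.
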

\begin{proof}
  i) It is clear that $D_0^0(i)=T^i \mathbf{1}=\mathbf{1}$ since $T$
  is a stochastic matrix; ii) consequence of i) and Equation
  (\ref{eq:Dkj}); iii) is proved by recurrence; a) is simply the
  definition of $D_k^j(i)$; b) consequence of iii) and of the
  recursive definition of $D_k^j(i)$.
\end{proof}

Lemma \ref{lemma:partial} provides an efficient way to compute all
$D_k^j(i)$ for $0 \leqslant k,j \leqslant K$ and $0\leqslant i
\leqslant \alpha$ (see Algorithm \ref{algo3}). However, these
computations suffer numerical instability in floating point algebra. This phenomenon is emprically studied in section \ref{sec:instability}.

\begin{lemma}\label{lemma:2}
  For all $k \geqslant 1$ we have:
  \begin{enumerate}[i)]
    \item $D_k^k(i)=\sum_{j=k}^i T^{i-j} Q D_{k-1}^k(j-k)$ for $i \geqslant k$;
    \item $\exists \mathbf{C}_k \in \mathbb{R}^L$ such as $D_k^k(i)=\mathbf{C}_k + O(k\nu^{i/k})$ and $D_k^{k+1}(i)=\mathbf{0} + O(k\nu^{i/k})$ for all $i \geqslant 2k$.
  \end{enumerate}
\end{lemma}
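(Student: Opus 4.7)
The plan is to derive (i) by directly iterating the recurrence (b) of Lemma \ref{lemma:partial}, and then to bootstrap this representation into the asymptotic statement (ii) by induction on $k$, exploiting the spectral gap of $T$.

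For part (i), I would first establish that $D_k^k(k-1)=\mathbf{0}$. By the explicit formula $D_k^j(i)=\sum_{\delta=0}^{j}(-1)^{\delta}\binom{j}{\delta}F_k(i-\delta)$ from Equation (\ref{eq:Dkj}), it suffices to observe that $F_k(n)=\mathbf{0}$ whenever $n<k$: expanding $(T+yQ)^n\mathbf{1}$ cannot produce a monomial of degree $k$ in $y$ with fewer than $k$ factors. Starting from this vanishing base and iterating $D_k^k(i)=TD_k^k(i-1)+QD_{k-1}^k(i-1)$, one collects the emitted $Q$-terms weighted by successive powers of $T$, stopping when the homogeneous remainder reaches index $k-1$. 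A change of summation variable $j=i-m$ produces the displayed convolution of $T$-powers against the $D_{k-1}^k$ sequence.

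For part (ii), I would use the spectral decomposition $T=\mathbf{1}\pi+R$, where $\pi$ is the unique stationary row vector (normalized by $\pi\mathbf{1}=1$) and $R$ has spectral radius $\nu$; equivalently, for every column vector $v$, $T^m v=(\pi v)\mathbf{1}+R^m v$ with $\|R^m v\|=O(\nu^m\|v\|)$. The induction runs on $k$. The base case $k=1$ is immediate: part (i) gives $D_1^1(i)=T^{i-1}Q\mathbf{1}$, converging to $\mathbf{C}_1=(\pi Q\mathbf{1})\mathbf{1}$ at geometric rate, while $D_1^{2}(i)=T^{i-2}(T-I)Q\mathbf{1}$ belongs to the kernel of $\pi$ and decays in the same way. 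For the inductive step, I substitute the induction hypothesis on $D_{k-1}^k$ into the expression of part (i) and split each $QD_{k-1}^k(j-1)$ along the $\mathbf{1}$-direction and its $\pi$-complement: the $\mathbf{1}$-components sum to a convergent scalar series, which identifies $\mathbf{C}_k$ as a scalar multiple of $\mathbf{1}$ and bounds the associated tail, while the complementary parts are contracted by $T^{i-j}$ and controlled by the spectral radius bound. The statement for $D_k^{k+1}(i)=D_k^k(i)-D_k^k(i-1)$ then follows by cancelling the common limit $\mathbf{C}_k$ and inheriting the error term.

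The main obstacle will be obtaining the uniform error bound $O(k\nu^{i/k})$ for $i\geqslant 2k$. Substituting the induction hypothesis produces a convolution sum dominated by $\sum_{j=k}^{i}\nu^{i-j}\nu^{(j-1)/(k-1)}$; turning the resulting single-term estimate into the stated rate requires examining the minimum of $(i-j)+(j-1)/(k-1)$ over $j\in[k,i]$ and absorbing the $(i-k+1)$ summands into the prefactor. The hypothesis $i\geqslant 2k$ is precisely what makes this absorption feasible and propagates the rate $\nu^{i/(k-1)}$ at level $k-1$ to the rate $\nu^{i/k}$ at level $k$.
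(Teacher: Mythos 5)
Your proof is correct in substance; part (i) follows the paper exactly (iterate recurrence b) of Lemma \ref{lemma:partial} from the vanishing base $D_k^k(k-1)=\mathbf{0}$, which the paper compresses into ``direct application of b)''), but your part (ii) takes a genuinely different route for the inductive step. The paper splits the convolution of (i) at a cut index $\alpha$, writes the head as $T^{i-\alpha}(\text{fixed vector})=\mathbf{C}^{\alpha}_{k+1}+O(\nu^{i-\alpha})$ by stochasticity, bounds the tail with the induction hypothesis, and then optimizes $\alpha$ to balance $\nu^{i-\alpha}$ against $\sum_{j\geqslant\alpha}k\nu^{j/k}$, which is where the rate $\nu^{i/(k+1)}$ comes from. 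You instead split every power $T^{i-j}=\mathbf{1}\pi+R^{i-j}$ termwise: the $\mathbf{1}\pi$ parts form a convergent scalar series that identifies $\mathbf{C}_k$ explicitly as a multiple of $\mathbf{1}$ --- a fact the paper never states, and which neatly avoids the awkward point that the paper's $\mathbf{C}^{\alpha}_{k+1}$ depends on $i$ through the optimal $\alpha$ and needs an extra limiting argument to be a genuine constant --- while the $R$-parts are controlled by the geometric convolution. Both routes give the claimed decay; yours is cleaner on the identification of the limit, the paper's is lighter on bookkeeping. Three caveats. First, carried out carefully the iteration in (i) yields $\sum_{j=k}^{i}T^{i-j}QD_{k-1}^{k}(j-1)$, not $D_{k-1}^{k}(j-k)$ as displayed: for $k=2$, $i=2$ the recurrence gives $D_2^2(2)=QD_1^2(1)=Q^2\mathbf{1}$ whereas the displayed formula gives $QD_1^2(0)=\mathbf{0}$; this is an index slip in the statement (harmless for the asymptotics), but you should not assert that the iteration ``produces the displayed convolution'' without flagging it. Second, the induction hypothesis only controls $D_{k-1}^{k}(m)$ for $m\geqslant 2(k-1)$, so the finitely many initial summands must be treated separately (they contribute a constant to $\mathbf{C}_k$ and an $O_k(\nu^{i})$ error). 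Third, absorbing the $i-k+1$ summands costs a prefactor that is polynomial in $k$ rather than exactly $k$ --- but the paper's own ``elementary analysis'' is no more precise on that point, so this is not a real gap.
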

\begin{proof}
  i) is a direct application of Lemma \ref{lemma:1}b). For $k=1$, i)
  simply gives $D_1^1(i)=T^{i-1}Q \mathbf{1}$ which proves ii) for
  $k=1$. We assume that ii) is true for some fixed rank $k$ and then decompose
  $D_{k+1}^{k+1}(i)$ into:
  \begin{equation}
    D_{k+1}^{k+1}(i)=\underbrace{T^{i-\alpha}\left(\sum_{j=k+1}^{\alpha} T^{\alpha-j}Q D_{k}^{k+1}(j-k-1)\right)}_A + \underbrace{\sum_{j=\alpha+1}^i T^{i-j} Q D_{k}^{k+1}(j-k-1)}_B
  \end{equation}
  for some $\alpha \geqslant 2k$. Thanks to the stochasticity of $T$,
  $\exists \mathbf{C}^\alpha_{k+1} \in \mathbb{R}^L$ such as
  $A=\mathbf{C}^\alpha_{k+1}+O(\nu^{i-\alpha})$, and since ii) is true
  at rank $k$, $B=\sum_{j=\alpha}^i O(k\nu^{j/k})$.  Elementary
  analysis proves that $\min_\alpha \left\{ \nu^{i-\alpha} +
    \sum_{j=\alpha}^i k\nu^{i'/k} \right\}=O\left((k+1)\nu^{i/(k+1)}\right)$ the
  minimum being obtained for $\alpha=i(k-1)/k$. ii) it then proved at
  rank $k+1$ with $\mathbf{C}_{k+1} = \mathbf{C}^\alpha_{k+1}$ for
  that particular $\alpha$.
\end{proof}

\begin{proposition}
  For all $k \geqslant 1$ and $0 \leqslant j \leqslant k$ and for any
  $i\geqslant \alpha \geqslant 2k$ 
%  $$
%  F_k(i)=\sum_{j=0}^k {i-\alpha  \choose j} D_k^j(\alpha)+O\left((i-\alpha)^k \nu^\alpha\right)
%  $$
  \begin{equation}\label{eq:Fkibis}
  D_k^j(i)=\sum_{j'=0}^{k-j} {i-\alpha  \choose j'} D_k^{j+j'}(\alpha)+O\left(k{i-\alpha \choose k-j} \nu^{\alpha/k}\right)
  \end{equation}
  and in the particular case where $j=0$ we get:
  \begin{equation}\label{eq:Fki}
  F_k(i)=F_k(\alpha) + \sum_{j'=1}^{k} {i-\alpha  \choose j'} D_k^{j'}(\alpha)+O\left(k {i-\alpha \choose k} \nu^{\alpha/k}\right).
  \end{equation}
\end{proposition}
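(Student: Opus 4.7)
The plan is to prove Equation~(\ref{eq:Fkibis}) by reverse induction on $j$, starting at $j=k$ and descending to $j=0$, combining the recurrences of Lemma~\ref{lemma:partial} with the decay estimate of Lemma~\ref{lemma:2}~ii).

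The base case $j=k$ collapses to $D_k^k(i)=D_k^k(\alpha)+O(k\nu^{\alpha/k})$, which is immediate from Lemma~\ref{lemma:2}~ii): both quantities agree with the vector $\mathbf{C}_k$ up to errors $O(k\nu^{i/k})$ and $O(k\nu^{\alpha/k})$ respectively, and since $i\geq\alpha\geq 2k$ the latter dominates.

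For the inductive step at rank $j$, I would first telescope relation a) of Lemma~\ref{lemma:partial} to obtain
\begin{equation*}
D_k^j(i) \;=\; D_k^j(\alpha) + \sum_{m=\alpha+1}^{i} D_k^{j+1}(m),
\end{equation*}
then substitute the inductive hypothesis into each $D_k^{j+1}(m)$, exchange the order of summation, and apply the hockey-stick identity $\sum_{m=\alpha+1}^{i}\binom{m-\alpha}{j'}=\binom{i-\alpha+1}{j'+1}$ to consolidate the inner sums. After the shift $j''=j'+1$, the coefficients of $D_k^{j+j''}(\alpha)$ combine to produce the polynomial part of the right-hand side of Equation~(\ref{eq:Fkibis}). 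The remainder is controlled analogously: the cascading error ultimately involves $D_k^{k+1}(m)=O(k\nu^{m/k})$ from Lemma~\ref{lemma:2}~ii), and since $m\geq \alpha\geq 2k$ the geometric sum over $m$ preserves the scale $\nu^{\alpha/k}$ while the iterated hockey-stick aggregation produces exactly the polynomial prefactor $\binom{i-\alpha}{k-j}$. Equation~(\ref{eq:Fki}) is then the $j=0$ specialization, with the $j'=0$ contribution $\binom{i-\alpha}{0}D_k^0(\alpha)=F_k(\alpha)$ pulled out explicitly.

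The main obstacle will be the careful bookkeeping of the binomial coefficients through repeated hockey-stick manipulations and reindexings, so that the boundary contributions at each level (typically the $j'=0$ term and the highest-order $j'$ term that feeds the next iteration) reconcile cleanly with the polynomial part; and verifying that the $\nu^{\alpha/k}$ factor in the remainder is not degraded when summing geometric bounds of the form $\nu^{m/k}$ over $m\in[\alpha+1,i]$, which hinges on $\nu<1$ so that $\nu^{\alpha/k}$ is the dominant tail contribution in that geometric series.
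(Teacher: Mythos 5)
Your proposal follows the paper's own proof essentially verbatim: the base case $j=k$ comes from Lemma~\ref{lemma:2}ii), and the descent on $j$ uses the telescoped relation $D_k^j(i)=D_k^j(\alpha)+\sum_{m=\alpha+1}^{i}D_k^{j+1}(m)$ together with a hockey-stick summation of the binomial coefficients. The only divergence is the off-by-one in that summation --- you obtain $\binom{i-\alpha+1}{j'+1}$ where the paper asserts $\binom{i-\alpha}{j'+1}$ --- which is precisely the bookkeeping subtlety you yourself flag; otherwise the two arguments coincide.
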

\begin{proof}
  A simple application of Lemma \ref{lemma:2}ii) proves that
  $D_k^k(i)=D_k^k(\alpha)+O(\nu^{\alpha/k})$ which is exactly Equation
  (\ref{eq:Fkibis}) for $j=k$. We then obtain the result for $j<k$ by
  recurrence and the fact that $D_k^j(i)=D_k^j(\alpha)+\sum_{i'=\alpha+1}^{i} D_k^{j+1}(i')$ and that $\sum_{i'=\alpha+1}^i {i'-\alpha \choose j'}={i-\alpha \choose j'+1}$.

\end{proof}

\begin{algorithm}
  \begin{algorithmic}
    \REQUIRE{
      The matrices $T$ and $Q$, a value $\alpha \geqslant K$, and a $O(K^2 \times L)$ workspace to keep the current value of $D_k^j(i)$ and $D_k^j(i-1)$ for all $0 \leqslant k,j \leqslant K$}
    \FOR{$i=0 .. \alpha$}
    \STATE \COMMENT{Initialization}
    \STATE $D_0^0(i)=\mathbf{1}$ 
    \STATE {\bf for} $j=1 ..K$ {\bf do} $D_0^j(i)=(-1)^i {j-1 \choose i}\mathbf{1}$ if $0 \leqslant i \leqslant j-1$, and $D_0^j(i)=\mathbf{0}$ if $i \geqslant j$ {\bf endfor}
    \STATE {\bf for} $k=1..K$ {\bf do} $D_k^0(i)=\mathbf{0}$ if $i=0$, and
    $D_k^0(i)=T D_k^0(i-1)+Q D_{k-1}^{0}(i-1)$ if $i \geqslant 1$ {\bf endfor}
    \ENDFOR
    \STATE\COMMENT{Recursion}
    \FOR{$k=1 .. K$ and $j=1 .. K$}
    \STATE update $D_k^j(i)$ either with 
    $D_k^{j-1}(i)-D_k^{j-1}(i-1)$ or
    $T D_k^j(i-1)+Q D_{k-1}^{j}(i-1)$
    \ENDFOR
  \end{algorithmic}
  \caption{Compute $D_k^j(\alpha)$ for all $0 \leqslant k,j \leqslant
    K$. The workspace complexity is $O(K^2 \times L)$ and 
    since all
    matrix vector product exploit the sparse structure of the
    matrices, the time complexity is $O(\alpha \times K^2 \times s
    \times L)$.}\label{algo3}
\end{algorithm}

%\begin{algorithm}
%  \begin{algorithmic}
%    \REQUIRE{The starting distribution $\mu_d$,
%      the matrices $T$ and $Q$, $\ell$, $d$, a value of $\alpha \geqslant k$, and a $O(k^2 \times L)$ workspace to keep the current value of $\left\{F_j(i),\ldots,F_j(i-j)\right\}$}
%    \STATE\COMMENT{Initialization}
%    \STATE $F_1(1)=Q \mathbf{1}$ and $F_1(0)=\mathbf{0}$
%    \FOR{$j=2..k$}
%    \STATE $F_j(j)=Q F_{j-1}(j-1)$ and $F_j(i)=\mathbf{0}$ for $i<j$
%    \ENDFOR
%    \STATE\COMMENT{Recursion} \STATE update all $F_j(i)$ up to rank
%    $i=\alpha$ using $F_k(i)=T F_k(i-1) + Q F_{k-1}(i-1)$
%    \STATE\COMMENT{Finalization} \STATE recursively compute approximated
%    $F_j(\ell-d)$ for $0 \leqslant j \leqslant k$ using Eq. (\ref{eq:Dkj})
%    and Eq. (\ref{eq:Fki}) with $i=\ell-d$ \ENSURE{for all $0 \leqslant j
%      \leqslant k$, $[g(y)]_{y^j}\simeq \mu_d F_j(\ell-d)$}
%  \end{algorithmic}
%  \caption{Approximatively compute the $k$ first terms of $g(y)$ (with
%    error $O(\ell^j \nu^{\alpha/j})$ for the order $j$ term) in the
%    particular case of a homogeneous Markov model by performing a partial
%    recursion up to $i=\alpha$. The workspace complexity is $O(k^2
%    \times L)$ and since all matrix vector product exploit the sparse
%    structure of the matrices, the time complexity is $O(\alpha \times
%    k \times s \times L)$ to compute all $F_j(i)$ up to rank
%    $i=\alpha$ and $O(k^3 \times L)$ to compute all $F_j(\ell-d)$ thus
%    resulting in a overall time complexity of $O(\alpha \times k
%    \times s \times L+k^3 \times L)$.}
%\end{algorithm}

\subsection{Comparison with known methods}

Up to our knowledge, there is no record of method allowing to compute
order $k$ moments of pattern count in heterogeneous Markov
sequences. This work was in fact initially motivated by this
observation. In the homogeneous case however, many interesting
approaches can be found in the literature. In most case, these methods
are limited to the computation of the first two moments, but several
of them can be also used to get arbitrary order moments like with our
method.

One of these approaches consist to consider the bivariate moment
generating function
\begin{equation}
f(y,z)\stackrel{\text{def}}{=}\sum_{ n \geqslant 0, \ell \geqslant d} \mathbb{P}(N_\ell = n)
y^n z ^\ell
\end{equation}
where $N_\ell$ is the random number of pattern occurrences in a
sequence of length $\ell$. Thanks to Equation (\ref{eq:mgf_homo}) it is easy to show that
\begin{equation}
f(y,z)
=z^d \times \mu_d \left( I - z(P+yQ) \right)^{-1} \mathbf{1}
\end{equation}
where $I$ denotes the identity matrix. It is then possible to get
order $k$ moments of $N_\ell$ using the relation:
\begin{equation}
\frac{\partial ^k f}{\partial y^k}(1,z)
=\sum_{\ell \geqslant d}
\mathbb{E}\left(
 \frac{N_\ell !}{(N_\ell - k)!}
  \right)  z^\ell.
\end{equation}
Such interesting approach have been developed by several authors
including \cite{nicodeme} and \cite{lladser}.  In order to apply
this method, one should first use a Computer Algebra System (CAS) to
perform the bivariate polynomial inversion of matrix $I - z(P+yQ)$ to
get $f(y,z)$ thus resulting in a complexity $O(L^3)$ where $L$ is the
number of states in the embedding Markov chain.  One hence needs to
compute the order $k$ partial derivative in $y$ of $f(y,z)$ prior to
to perform (fast) Taylor expansion of the result up to $z^\ell$. The
resulting complexity is $O(\log_2 \ell \times D^3)$ where $D$ is the
degree of the denominator in $\partial ^k f/\partial y^k(1,z)$. Like
in Algorithm \ref{algo2} we get a cubic complexity with $L^3$ for
linear algebra computations, and a logarithmic complexity with $\ell$
thanks to the binary decomposition. However, this method is much more
sophisticated to implement (CAS against simple manipulation of
polynomial matrices) and the $D^3$ term that appears in the Taylor
expansion complexity hide in fact at least a cubic complexity in $k$
which is not easy to handle. Let us note that \cite{nicodeme} also
suggests to obtain asymptotic development of moments by computing only
the local behaviour of the generating function $f(y,z)$ which allows
computation to be performed in faster floating point
arithmetic. However, this approach can not gives the exact moments but
only approximations, and one still require to perform the formal
inversion of an order $L$ bivariate polynomial matrix which is an
expensive step.

More recently, \cite{ribeca} suggested to compute full bulk of the
exact distribution of $N_\ell$ through Equation (\ref{eq:mgf_homo})
using a power method like in Section \ref{section:power} with the
difference that all polynomial products are performed using Fast
Fourier Transform (FFT). The drawback FFT polynomial products is that
the resulting coefficient are known with an absolute precision equal
to the largest one times the relative precision of floating point. As
a consequence, the distribution is well computed only in its center
part. Fortunately, this is precisely the part of the distribution that
matters for moment computations. Using this approach, and a very
careful implementation, one can then compute the full distribution
with a complexity $O(L^3 \times \log_2 \ell \times n_{\text{max}}
\log_2 n_\text{max} )$ where $n_\text{max}$ is the maximum number of
pattern occurrences in the sequence. Once again, the resulting
complexity is likely to be much higher that the one of Algorithm
\ref{algo2} since $k^2$ is usually far smaller than $n_{\text{max}}
\log_2 n_\text{max}$. Moreover, Algorithm \ref{algo2} is again much
easier to implement than this sophisticated FFT approach.

Finally, one should note that both these two known approaches involve
a complexity $O(L^3)$ in time (and at least $O(L^2)$ in memory) which
makes difficult or even impossible to use them for moderate or high
complexity patterns (ex: $L=100$ or $L=1000$). For such patterns,
Algorithm \ref{algo1} appears to be a safe but slow alternative
(linear complexity with sequence length $\ell)$ and Algorithm
\ref{algo3} seems to be a very promising approach since it allows to
handle such complex patterns while retaining a logarithmic complexity
with $\ell$ like in Algorithm \ref{algo2}. Unfortunately, the
numerical instabilities observed in practice with Algorithm
\ref{algo3} need to be investigated further before to trust this
approach.

\section{Application to DNA patterns in genomics}

\subsection{Dataset}\label{section:data}

We consider the a order $d=1$ homogeneous Markov model over
$\mathcal{A}=\{{\tt A},{\tt C},{\tt G},{\tt T}\}$ which transition
matrix estimated over the complete genome of the bacteria {\it
  Escherichia. coli} is given by:
$$
\pi=\left(
  \begin{array}{cccc}
    0.30&   0.21&   0.22&  0.27\\   
    0.23&   0.23&   0.33&  0.22\\   
    0.28&   0.29&   0.23&  0.20\\   
    0.19&   0.28&   0.23&  0.30\\
  \end{array}
\right)
$$
We consider a sequence $X=X_1 \ldots X_\ell$ of length $\ell=400\,000$ and
starting with $X_1={\tt A}$.

\subsection{Some moments}

In this section, we compute the first $k=4$ moments of several DNA
patterns. We then use these moments to compute:
$$
\text{expectation $m=m_1$}, \quad \text{standard deviation $\sigma=\sqrt{m_2}$}
$$
$$
\text{skewness $\gamma_1=m_3/m_2^{3/2}$}, \quad \text{and excess kurtosis $\gamma_2=m_4/m_2^2-3$}
$$
% expectation $m=m_1$,
%standard deviation $\sigma=\sqrt{m_2}$, skewness
%$\gamma_1=m_3/m_2^{3/2}$, excess kurtosis $\gamma_2=m_4/m_2^2-3$
where $m_i \stackrel{\text{def}}{=} \mathbb{E}[(N-m_1)^i]$ is the centered
moment of order $i$. A negative (resp. positive) skewness indicates
that the mass of the distribution is concentrated on the right
(resp. left) side of the expectation. A skewness of zero indicates a
balanced distribution. A negative (resp. positive) excess kurtosis
indicates that the distribution is more flat (resp. more peaked) than
the Gaussian distribution. A Gaussian distribution has a excess
kurtosis of zero.

\begin{table}
  \begin{center}
  \caption{First four moments of several DNA patterns computed through the power algorithm (running time indicated in seconds). The background model is the order $d=1$ homogeneous Markov model defined in section \ref{section:data} and the sequence length is $\ell=400,000$.}\label{table:moments}
    \begin{tabular}{crrrrrr}
      \hline
      \text{Pattern} & L & \text{exp.} &  \text{std. dev.} & \text{skewness} & \text{ekurtosis} & \text{time} \\
      \hline      
      ${\tt GCTGGT}$ & 9 & 70.09 & 8.364 & 0.11910 & 0.01413 & 0.09\\
      ${\tt AGAGAG}$ & 9 & 84.89 & 9.791 & 0.12780 & 0.01903 & 0.09\\
      ${\tt GGGGGG}$ & 9 & 65.91 & 10.260 & 0.20290 & 0.05363 & 0.09\\
      \hline
      ${\tt GCTGGTGG}$ & 11 & 3.782 & 1.945 & 0.51420 & 0.26430 & 0.11\\
      ${\tt GCTGGNGG}$ & 14 & 20.79 & 4.559 & 0.21920 & 0.04801 & 0.11\\
      ${\tt GNTGGNGG}$ & 21 & 79.55 & 9.014 & 0.11570 & 0.01390 & 0.49\\
      ${\tt GNTGNNGG}$ & 28 & 340.1 & 18.680 & 0.05628 & 0.00331 & 1.10\\
      ${\tt GNNGNNGG}$ & 63 & 1508.0 & 42.290 & 0.03283 & 0.00136 & 15.80\\
      \hline      
    \end{tabular}
  \end{center}
\end{table}

On Table \ref{table:moments} we can see the value of these quantities
for several DNA patterns. For the first three simple patterns, we can
see how the additional information off skewness and excess kurtosis
gives us a better description of their distribution. For example, we
know from theory that highly overlapping patterns are distributed
according to compound Poisson approximations. This is exactly why we
observe an increasement of skewness and kurtosis from Pattern ${\tt
  GCTGGT}$ (non-overlapping) to Pattern ${\tt GGGGGG}$ (highly
self-overlapping).

If we consider now the more complex patterns of the second part of
Table \ref{table:moments} we can observe how the running time of
Algorithm \ref{algo2} quickly increases with $L$. This is obviously
not a surprise since we expect a cubic complexity in this parameter
with this approach. One should however note that it is nevertheless
possible to deal with moderately complex patterns like ${\tt
  GNNGNNGG}$ which contains in fact a total of $4^4=256$ simple
patterns. Another interesting observation is that both skewness and
kurtosis get closer to zero when we add more symbol ${\tt N}$ into the
pattern. This is due to the fact that adding more ${\tt N}$ makes the
pattern more frequent (this can be seen with the geometrically
increasing expectation) and that Gaussian approximations for pattern
problem are well known to work better for frequent patterns.

\subsection{Numerical stability of the partial recursion}\label{sec:instability}

\begin{figure}
  \begin{center}
    \includegraphics[clip,trim=0 0 0 0,width=0.7\textwidth,angle=270]{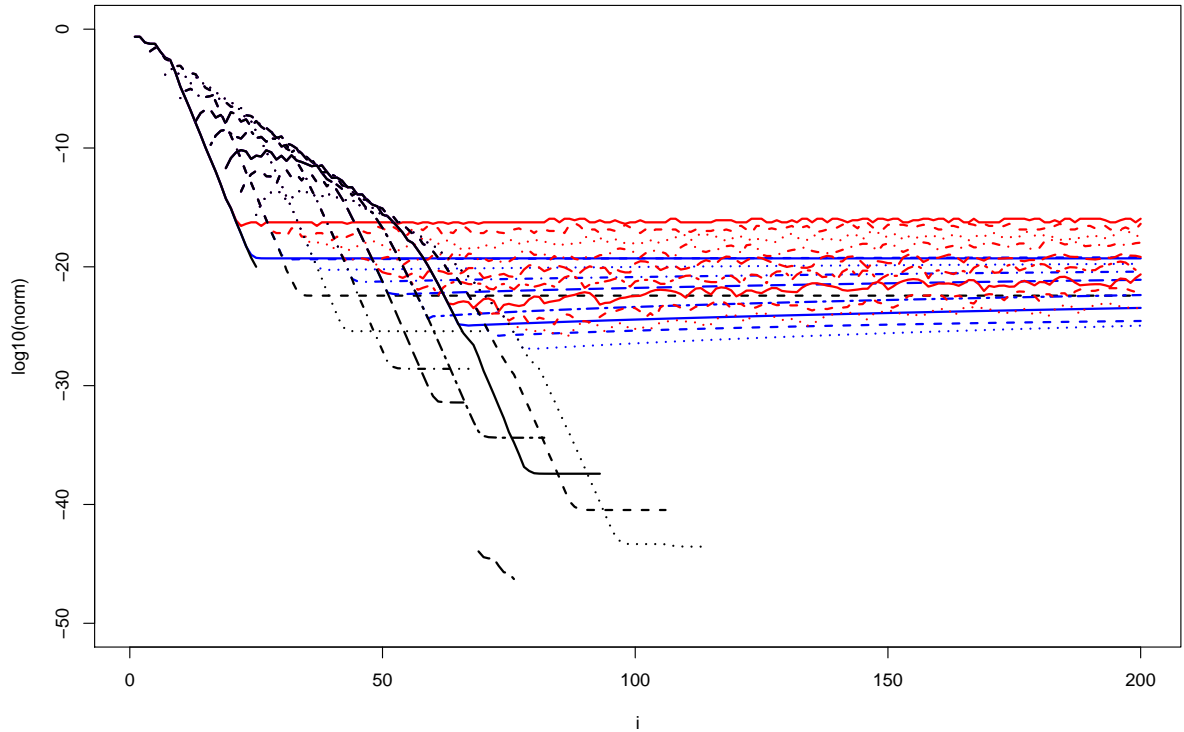}
  \end{center}
  \caption{Plot of $\log_{10} \left|\left|D_{k}^{k+1}(i)
      \right|\right|_\infty$ ($y$-axis) for $1 \leqslant k \leqslant
    9$ (from left to right), and $1 \leqslant i \leqslant 100$
    ($x$-axis) for the pattern $\mathcal{W}={\tt GNTGNNGG}$ over the DNA
    alphabet $\mathcal{A}=\{{\tt A},{\tt C},{\tt G},{\tt T}\}$ (${\tt N}$ symbol meaning ``any letter'') using a
    order $d=1$ Markov model. The curves are obtained through
    Algorithm \ref{algo3} using recurrence relation
    Lemma \label{lemma:1}: a) only (Red curve); b) only (Blue curve);
    a) and b) keeping the $D_k^j(i)$ displaying the smallest norm
    (Black curve). All missing values correspond to
    $\left|\left|D_{k}^{k+1}(i) \right|\right|_\infty=0$.}
  \label{fig:instability}
\end{figure}

On Figure \ref{fig:instability} we study empirically the convergence
of $D_k^{k+1}(i)$ towards $\mathbf{0}$ by computing
$\left|\left|D_{k}^{k+1}(i) \right|\right|_\infty$ for several $k$
through Algorithm \ref{algo3}. We consider here three way of updating
$D_k^j(i)$: by using only through $D_k^{j-1}(i)-D_k^{j-1}(i-1)$ (Red
curve); by using only through $T D_k^j(i-1)+Q D_{k-1}^{j}(i-1)$ (Blue
curve); or by taking the update which displays the smallest
norm (Black curve). If these three alternative approaches give similar
results when $\left|\left|D_{k}^{k+1}(i) \right|\right|_\infty
\geqslant 10^{-15}$ differences start to appear for smaller
values. The differential recurrence relation (Red curve) quickly start
to accumulate machine precision residuals and results in noisy curves
with a slow increasement. When using the matrix recurrence relation
(Blue curve) a similar problem arise, however appearing slightly later
and with far less noise. Surprisingly, the last approach which combine
the two updating methods at each step benefits from a synergistic
effect and displays a far better stability. A similar behaviour have
been observed for a wide range of tested patterns (data not shown).

%[TODO] Consequences on moment computations.

\subsection{Near Gaussian approximations}

Gaussian approximations for random pattern counts are widely used in the
literature. We want here to push forward this idea by taking advantage
of higher order moments to get near Gaussian approximations. This well
known technique is described in details in Appendix
\ref{sec:edgworth}.

\begin{figure}
  \begin{center}
  \makebox{\includegraphics[clip,trim=0 0 0 0,angle=270,width=1.0\textwidth]{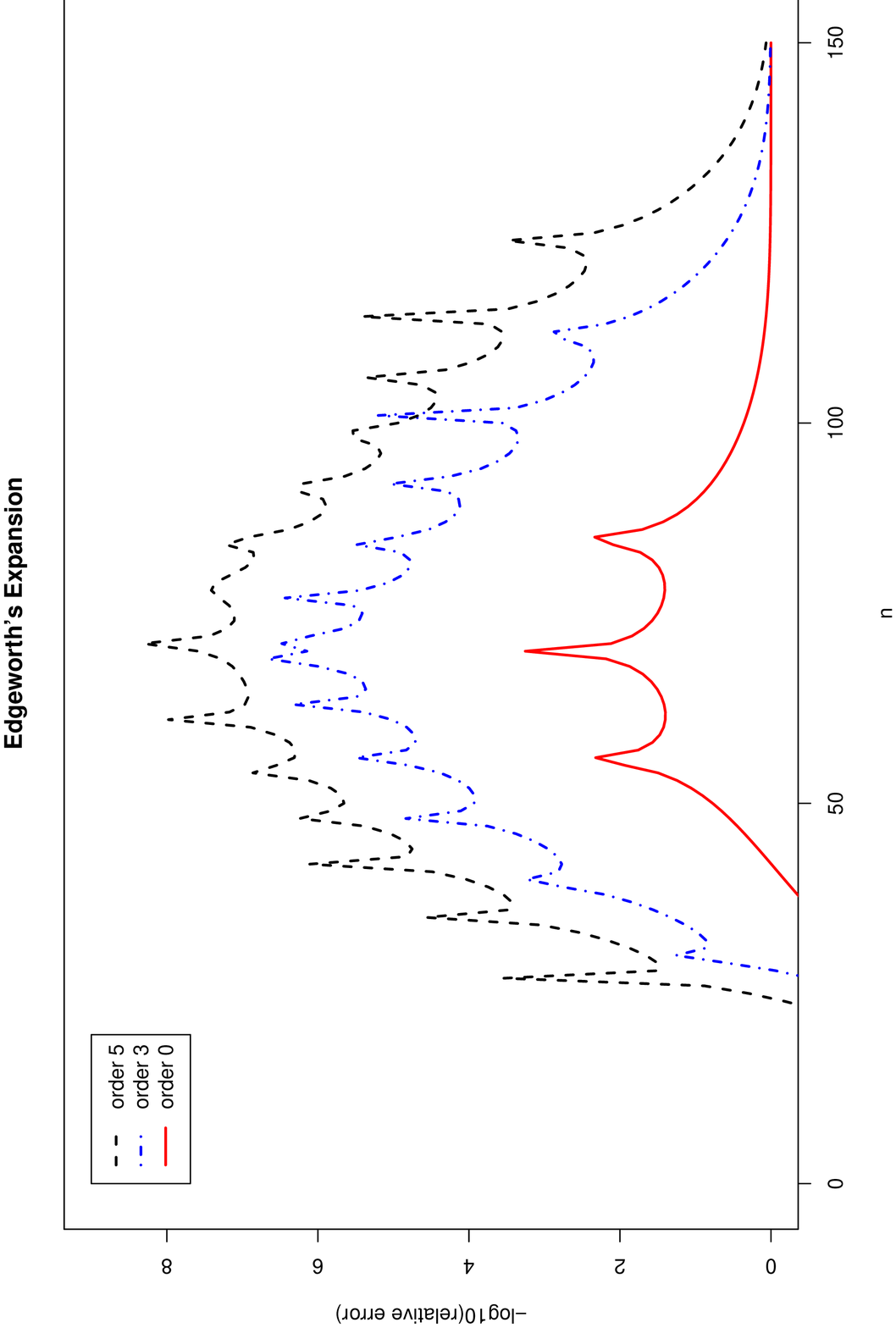}}
  \end{center}
  \caption{Relative error in decimal log scale of Edgeworth's
    expansion of order $s=0$ (Red-solid), order $s=3$
    (Blue-dotdashed), and order $s=5$ (Black-dashed)
    for Pattern ${\tt GCTGGT}$ on a order 1 homogeneous Markov
    model (parameter estimated on the complete genome of {\it
      E. coli}) of length $\ell=400\,000$.}\label{fig:edgeworth1}
\end{figure}

We can see on Figure \ref{fig:edgeworth1} the relative error (in
log-scale) of several Edgeworth's approximations for the distribution
of pattern ${\tt GCTGGT}$. The solid line shows the reliability of
plain Gaussian approximation (which correspond to an order $s=0$
Edgeworth's expansion). Unsurprisingly, this approximation works
better around the expectation ($\mathbb{E}[N]=70.09$ according to
Table \ref{table:moments}) providing two exact digits on the range
$[54;85]$, and one exact digit on the range $[50;92]$. Beyond these
limit, we get too far in the tail distribution to get reliable
results. This behaviour is exactly what we expect from the central
limit theory.

If we consider now order $s=3$ Edgeworth's expansion (that uses
moments up to order $k=5$) depicted with a dotdashed line on Figure
\ref{fig:edgeworth1}, we see a dramatic improvement both on the
accuracy of the approximation (up to 6 exact digits) and on the range
of reliability (at least one exact digit on $[28;118]$). We can even
get a further improvement by considering order $s=5$ expansion (dashed
line) which uses moments up to order $k=7$. In both case however, the
reliability of these approximations decreases dramatically when we get
far enough in the tail distributions.

We observe a very similar behaviour for Pattern ${\tt AGAGAG}$ and
Pattern ${\tt GGGGGG}$ and the corresponding figures are hence not
shown to save space.

Thanks to this work we see that for a modest additional cost
(computing moments up to order $k=5$ or $k=7$ instead of simple first
and second moments), one can dramatically improve the reliability of
Gaussian approximations for pattern problems.

\subsection{Near Poisson approximations}

A very common alternative to Gaussian approximations for random
pattern counts is to turn towards Poisson approximations. These
approximations are known to be quite accurate for non-overlapping
patterns, but also to fail for highly self overlapping patterns for
which compound Poisson approximations are known to perform better. We
want here to evaluation the interest of near Poisson approximations
provided by the Gram-Charlier Type B series described in Appendix
\ref{sec:gram_charlier}.

\begin{figure}
  \begin{center}
  \includegraphics[clip,trim=0 0 0 0,angle=270,width=1.0\textwidth]{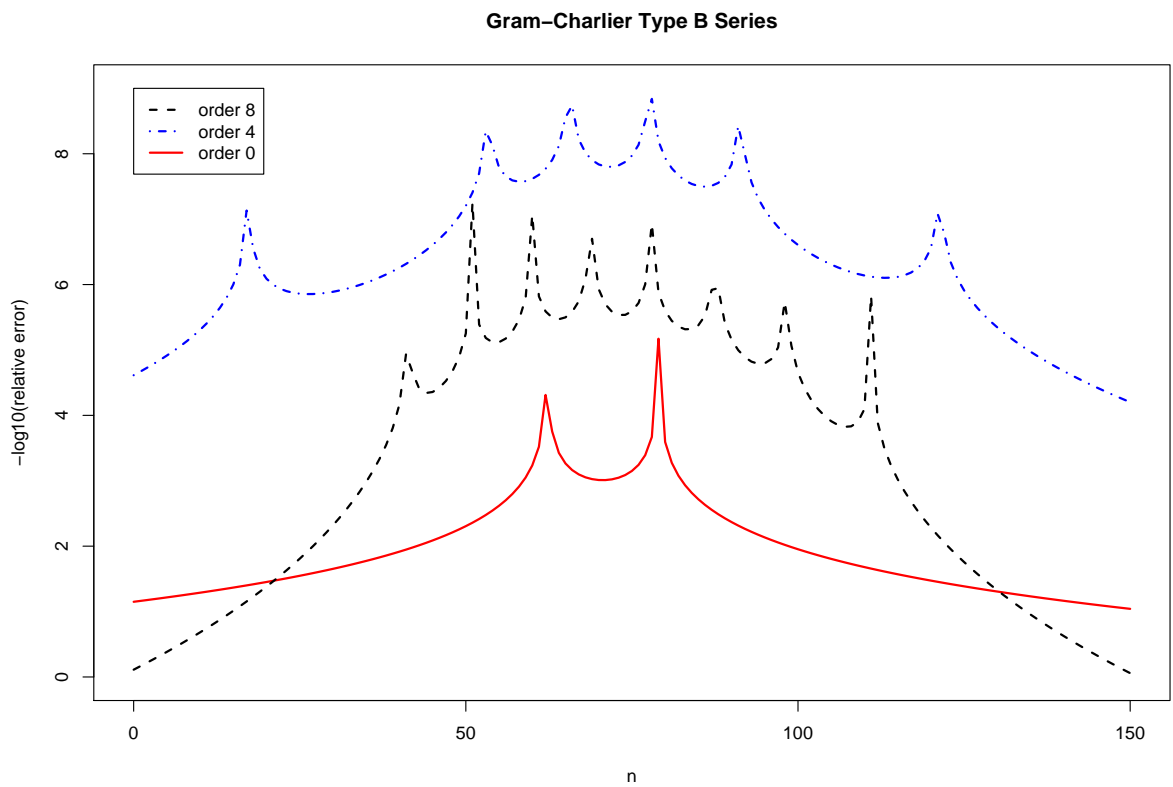}
  \end{center}
  \caption{Relative error in decimal log scale of Gram-Charlier type B
    approximation of order $s=0$ (Red-solid) to order $s=4$ (Blue-dotdashed) to order
    $s=8$ (Black-dashed) for Pattern ${\tt GCTGGT}$ on a order 1 homogeneous
    Markov model (parameter estimated on the complete genome of {\it
      E. coli}) of length $\ell=400\,000$.}\label{fig:poisson1}
\end{figure}

For the non-overlapping pattern ${\tt GCTGGT}$, we can see on Figure
\ref{fig:poisson1} that the plain Poisson approximation (order $s=0$
Gram-Charlier Type B series) gives already very good results with at
least one exact digit on all the distribution, and up to 4 or 5 of
them in the region close to the expectation. This interesting result
is dramatically improved by the order $s=4$ approximations which gives
at least 4 exact digits on all the considered range and more that 8
exact digits around the expectation. Surprisingly, the order $s=8$
approximation is less reliable than the previous one, and gives even
worse results that the plain Poisson approximation in the tail
distributions. This is due to the fact that the coefficients $c_k$
computed according to Equation (\ref{eq:ck}) accumulate large terms
that compensate each other. This is a typical scenario for large
relative errors in floating point arithmetic. One can solve this
problem either by performing computations with an arbitrary number of
digits (usually slow=), or one can explicitly compute the expected
relative error with the current machine-precision and renounce to use
unreliable coefficients.

\begin{figure}
  \begin{center}
  \includegraphics[clip,trim=0 0 0 0,angle=270,width=1.0\textwidth]{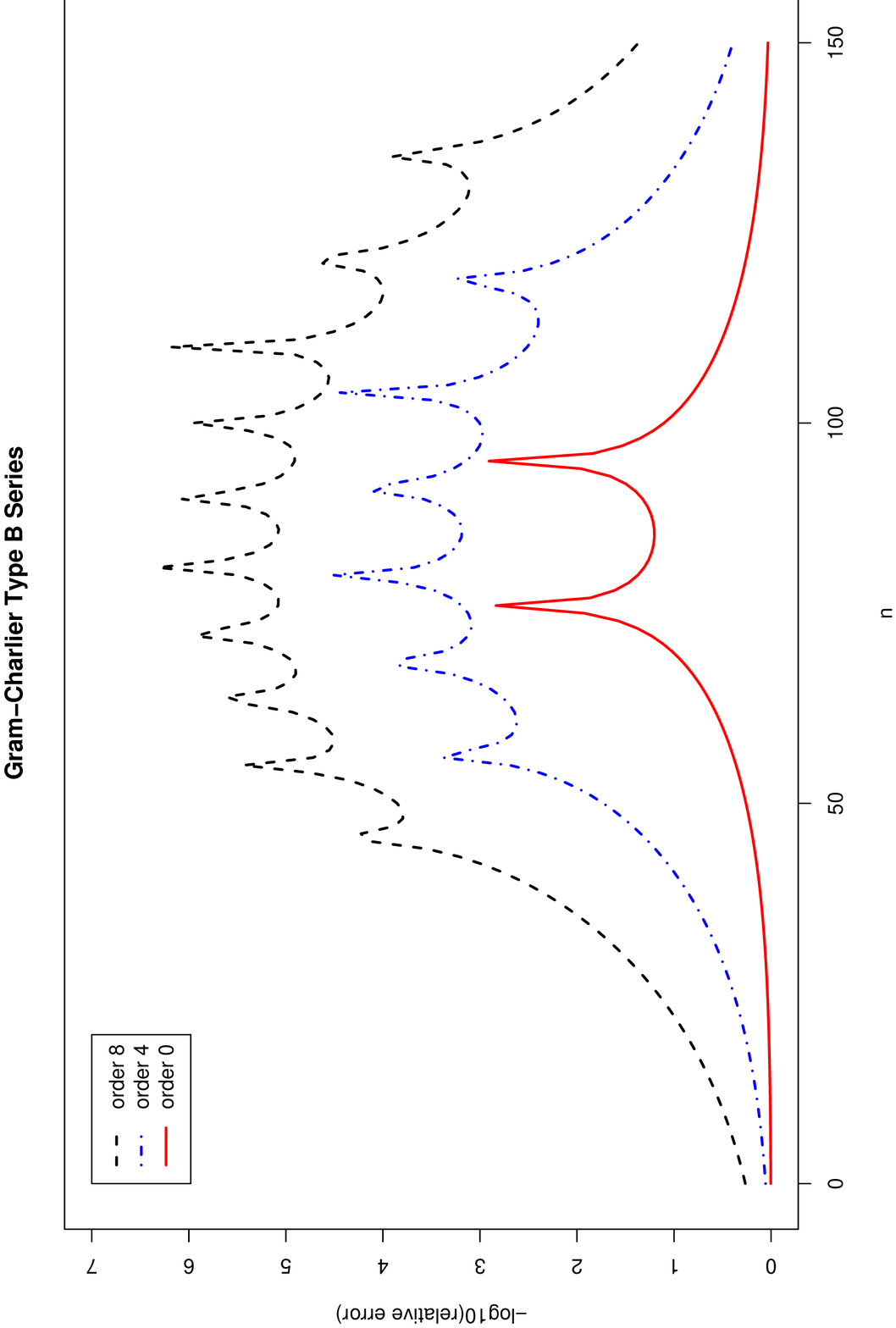}
  \end{center}
  \caption{Relative error in decimal log scale of Gram-Charlier type B
    approximation of order $s=0$ (Red-solid) to order $s=4$ (Blue-dotdashed) to order
    $s=8$ (Black-dashed) for Pattern ${\tt AGAGAG}$ on a order 1 homogeneous
    Markov model (parameter estimated on the complete genome of {\it
      E. coli}) of length $\ell=400\,000$.}\label{fig:poisson2}
\end{figure}

If we consider now the self-overlapping pattern ${\tt AGAGAG}$, we
know from theory that Poisson approximations are not supposed to
perform well. This is the reason why we observe on Figure
\ref{fig:poisson2} that the plain Poisson approximations only works on
a very limited range the distribution (roughly on $[69;103]$). Once
again however, order $s=4$ or $s=8$ Gram-Charlier expansion
dramatically improve the reliability of the approximations getting up
to 6 exact digits close to the expectation and at least one exact
digits on a much wider range (up to $[24;150]$ for order $s=8$). One
should note that in this case, the numerical issue observed for high
order approximations for the previous pattern does not occur. We get a
very similar result for the even more self-overlapping pattern ${\tt
  GGGGGG}$ and the corresponding figure is then omitted to save space.

Like with near Gaussian approximations, we see that near Poisson
approximations can dramatically improve the reliability of Poisson
approximations for a very modest cost (ex: computing moments up to
order $k=4$ or $k=8$).

\section{Conclusion}

In this paper, we have derived from the explicit expression of the mgf
of a pattern random count $N$, a new formula allowing to compute a
arbitrary number $k$ of moments of $N$. We also have introduced three
efficient algorithms to perform this computation. The first one allow
the computation of pattern count moments of arbitrary order in the
framework heterogeneous Markov model which is a completely new result
(up to our knowledge). The second algorithm, suitable for homogeneous
models and low complexity patterns, appear to have a better or similar
complexity to state-of-the art known algorithms but with a far much
simpler implementation. Finally, the third algorithms uses partial
recursions exploiting the sparse structure of the transition matrix to
provide a logarithmic complexity with the sequence length even for
high complexity patterns. This very promising approach however suffers
from numerical instabilities in floating point arithmetic that need to
be further investigated.

One should note that our main result can be easily extended to mixed
moments of several pattern counts. In order to save space, we give
here such as result only for the particular case of two patterns
$\mathcal{W}_1$ and $\mathcal{W}_2$ in a homogeneous model. We assume
that the final states of or DFA could be partitioned into
$\mathcal{F}=\mathcal{F}_1 \cup \mathcal{F}_2$ such as $\mathcal{F}_1$
(resp. $\mathcal{F}_2$) count the number $N_1$ (resp. $N_2$) of
occurrences of $\mathcal{W}_1$ (resp. $\mathcal{W}_2$). This is always
possible by duplicating states. We consider
\begin{equation}
  f(y_1,y_2)\stackrel{\text{def}}{=}
  \sum_{n_1,n_2 \geqslant 0} \mathbb{P}(N_1=n_1,N_2=n_2) y_1^{n_1} y_2^{n_2}
\end{equation}
and we then have $f(y_1,y_2)=\mu_d (P+y_1 Q_1 +y_2 Q_2)^{\ell-d}
\mathbf{1}$. By introducing now
$g(y_1,y_2)\stackrel{\text{def}}{=}\mu_d \left(T + y_1 Q_1+ y_2
  Q_2\right)^{\ell-d}\mathbf{1}$ we get for any $k_1,k_2 \geqslant 0$ that:
\begin{equation}
  \mathbb{E}\left( \frac{N_1!}{(N_1-k_1)!} \times 
\frac{N_2!}{(N_2-k_2)!}
 \right)=k_1!k_2! [ g(y_1,y_2) ]_{y_1^{k_1} y_2^{k_2}}.
\end{equation}

As an application, we have considered the distribution of DNA patterns
in genomic sequences. In this particular framework, we have shown how
order $k=3$ and $k=4$ moments allow to get a better description of the
distribution (with quantities like skewness and excess kurtosis). We
have also considered moment-based approximations namely Edgeworth's
expansion (near Gaussian approximations) and Gram-Charlier Type B
series (near Poisson approximations). For both approximations, we have
seen how the additional information provided by a couple of higher
order moments can dramatically improve the reliability of these common
approximations. As a perspective, it seems to be very promising to
develop near geometric or compound Poisson distribution with
Gram-Charlier Type B series.

\appendix

\begin{center}
    {\bf \Large APPENDIX}
  \end{center}

\section{Moments and cumulants}\label{appendix:cum}

For any random variable $X$ and for any $k \geqslant 0$ we define the
following quantities: $g_k\stackrel{\text{def}}{=}1/k!\mathbb{E}\left[
  X!/(X-k)!\right]$ the coefficient of degree $k$ in the polynomial
$g(y)$ defined in Section \ref{section:main}; $m'_k
\stackrel{\text{def}}{=} \mathbb{E}(X^k)$ the moment of order $k$;
$m_k \stackrel{\text{def}}{=} \mathbb{E}[(N-m'_1)^k]$ the centered
moment of order $k$; and $\kappa_k$ the cumulant of order $k$ defined
by $h(t)\stackrel{\text{def}}{=}\log \mathbb{E}(e^{tN})=\sum_{k
  \geqslant 1} \kappa_k (t^k / k !)$.  Cumulants and moments are
connected through the following formula:
\begin{equation}
\kappa_k = m'_k - \sum_{l=1}^{k-1} {k-1 \choose l-1} \kappa_l m'_{k-l}.
\end{equation}
Using this formula we get: $\kappa_1=\mathbb{E}(X)$ and $\kappa_2 =
m_2=\mathbb{V}(X)$, $\kappa_3=m_3$, and $\kappa_4=m_4-3m_2^2$.  The
skewness $\gamma_1$ and excess kurtosis can be expressed from
cumulants: $\gamma_1 = \kappa_3 / \kappa_2^{3/2}$ and $\gamma_2 =
\kappa_4 / \kappa_2^2$.

\section{Edgeworth's expansion}\label{sec:edgworth}

This is directly taken from \cite{edgeworth} except the explicit
order 5 expansion given in Equation (\ref{eq:edgeworth}) which is a new contribution
(only order 3 explicit expansions seems to be available in the literature).

Let $X$ be a centered random variable ($\mathbb{E}[X]=0$) that admit
finite moments of all orders (we denote by $\sigma^2$ the variance of
$X$), let $\Phi$ defined by
$\Phi(t)\stackrel{\text{def}}{=}\mathbb{E}[e^{iX}]$ (where $i$ denote
the imaginary complex number) be its caracteristic function. Let
$\varphi$ be the caracteristic function of $X/\sigma$, we have
$\varphi(t)=\Phi(t/\sigma)$. The definition of cumulants (see Appendix
\ref{appendix:cum}) then allows to write the expansion:
\begin{equation}
\log \phi(t)=\log \Phi(t/\sigma) \sim \sum_{k=2}^{\infty} \frac{\kappa_k}{\sigma^k k!}(it)^k
\end{equation}
then by denoting $S_k \stackrel{\text{def}}{=} \kappa_k/\sigma^{2k-2}$ we get
\begin{equation}\label{eq:phi_t}
\phi(t) \sim \exp \left\{
\sum_{r=1}^{\infty} \frac{S_{r+2} \sigma^r}{(r+2)!}(it)^{r+2}
\right\}.
\end{equation}

The Fourier transform of expansion (\ref{eq:phi_t}) then gives:
\begin{equation}\label{eq:Edge}
q(x)=Z(x)\left(1+\sum_{s=1}^{\infty} \sigma^s \times \left\{
    \sum_{\{k_m\}_s} H_{s+2r}(x) \prod_{m=1}^{s} \frac{1}{k_m!} \left(
      \frac{S_{m+2}}{(m+2)!}
\right)^{k_m}
\right\} \right)
\end{equation}
where $q(x)\stackrel{\text{def}}{=} \sigma p(\sigma x)$ is the
probability distribution function (pdf) of $X/\sigma$ ($p(x)$ being
the pdf of $X$), where $Z(x)=\exp(-x^2/2)/\sqrt{2\pi}$ is the pdf of a
standard Gaussian variable, where $\{k_m\}_s$ is the set of all
non-negative integer solution of the Diophantine equation
$k_1+2k_2+\ldots+s k_s = s$, $r=k_1+k_2+\ldots+k_s$, and where $H_k(x)$ are
the Hermite polynomials defined recursively by
$H_0(x)\stackrel{\text{def}}{=}1$ and
$H_k(x)\stackrel{\text{def}}{=}xH_{k-1}(x)-H_{k-1}'(x)$ for all $k
\geqslant 1$.

Here are the sets of $\{k_m\}_s$ for $1 \leqslant s \leqslant 5$:
$\{k_m\}_1=\{1\}$, $\{k_m\}_2=\{20,01\}$, $\{k_m\}_3=\{300,110,001\}$,
$\{k_m\}_4=\{4000,2100,0200,1010,0001\}$, and
$\{k_m\}_5=\{50000,31000,12000,$ $20100,01100,10010,00001\}$, and here is
the explicit expression of (\ref{eq:Edge}) up to order $s=5$
(such an explicit expression can be found up to $s=3$
in \cite{BeK95}):
\begin{multline}\label{eq:edgeworth}
  \frac{q(x)}{Z(x)} \simeq
    1 + \sigma \left\{H_3(x) \frac{S_3}{3!} \right\}\\
    +\sigma^2 \left\{ H_4(x) \frac{S_4}{4!} + H_6(x) \frac{S_3^2}{2!3!^2}\right\}
    +\sigma^3 \left\{  H_5(x) \frac{S_5}{5!} + H_7(x) \frac{S_3 S_4}{3!4!} + H_9(x) \frac{S_3^3}{3!^4} \right\}\\
    +\sigma^4 \left\{  H_6(x) \frac{S_6}{6!} + H_8(x) \left(\frac{S_3 S_5}{3!5!} + \frac{S_4^2}{2!4!^2}\right) + H_{10}(x) \frac{S_3^2 S_4}{2!3!^24!} + H_{12}(x) \frac{S_3^4}{4!3!^4}\right\}\\
    +\sigma^5 \left\{  H_7(x) \frac{S_7}{7!} + H_9(x) \left( \frac{S_4 S_5}{4!5!}+\frac{S_3 S_6}{3!6!}\right) + H_{11}(x) \left( \frac{S_3^2 S_5}{2!3!^25!}
+\frac{S_3 S_4^2}{2!3!4!^2}  \right) \right.\\
 \left.  + H_{13}(x) \frac{S_3^3 S_4}{3!^44!}+ H_{15}(x) \frac{S_3^5}{5!3!^5} \right\}
\end{multline} 

%Recursive formula for the cdf.

\section{Gram-Charlier type B serie for near Poisson distribution}\label{sec:gram_charlier}

This is initially taken from \cite{gram-charlier} but we derive new
recurrence relation that are more adapted to a modern computational
framework than the explicit (and sometimes erroneous) formulas given
in the original article.

Let $\psi (i)\stackrel{\text{def}}{=} e^{-\lambda} \lambda^i / i!$ be
the pdf of a Poisson distribution of parameter $\lambda$, and let
$\Delta$ be the differential operator defined by $\Delta \psi
(i)\stackrel{\text{def}}{=}\psi(i)-\psi(i-1)$. Our objective is to
approximate the pdf $F$ of a discrete non-negative random variable $X$
with
\begin{equation}
F(i)\simeq \sum_{j=0}^{s} c_j \Delta^j \psi(i) 
\end{equation}
In order to do so we use a moment method and find a solution
$(c_0,c_1,\ldots,c_s)$ of $\sum_{j=0}^{s} c_j P_k^{j}(\lambda) =
\mathbb{E}[X^k]$ for all $0 \leqslant k \leqslant s$ with
$P_k^j(\lambda)\stackrel{\text{def}}{=} \sum_{i \geqslant 0}
i^k\Delta^j \psi (i)$ for all $j,k \geqslant 0$.

It is clear that we have $P_0^0(\lambda)=1$, and we have the following
recurrence relation for all $k,j \geqslant 0$:
\begin{equation}
P_{k+1}^0(\lambda)=\lambda\left[
  P_{k}^0(\lambda)+\frac{d P_{k}^{0}}{d\lambda}(\lambda)
\right]
\quad\text{and}\quad
P_k^{j+1}(\lambda)=-\frac{d P_{k}^{j}}{d\lambda}(\lambda).
\end{equation}
We hence get that $c_0=1$ and we derive the following recurrent relation for
$k\geqslant 1$:
$$
 c_k=\frac{1}{P_k^k(\lambda)} \left(
\mathbb{E}[X^k]-\sum_{j=0}^{k-1} c_j P_k^j(\lambda)
\right).
$$
Please note that $P_k^k(\lambda)$ is always a scalar. If we now denote
by $g_k\stackrel{\text{def}}{=}1/k!\mathbb{E}\left[ X!/(X-k)!\right]$
the we can show by recurrence for all $k \geqslant 1$ that we finally have:
\begin{equation}\label{eq:ck}
c_k=-\frac{(k-1)}{k!}g_1^k+\sum_{j=2}^{k}(-1)^j \frac{g_1^{k-j}g_j}{(k-j)!}
\end{equation}

Here are the explicit first $5$ terms of this formula:
$$
c_2=g_2-\frac{g_1^2}{2}
\quad
c_3=-g_3 + g_1 g_2 -\frac{g_1^3}{3}
\quad
c_4=g_4-g_1 g_3 + \frac{g_1^2 g_2}{2} - \frac{g_1^4}{8}
$$
$$
c_5=-g_5+g_1 g_4 - \frac{g_1^2 g_3}{2}+\frac{g_1^3 g_2}{6} -
\frac{g_1^5}{30} \quad c_6=g_6-g_1 g_5+\frac{g_1^2 g_4}{2} -
\frac{g_1^3 g_3}{6}+\frac{g_1^4 g_2}{24} - \frac{g_1^6}{144}.
$$

\bibliographystyle{apt}
\bibliography{biblio}

\begin{thebibliography}{10}

\bibitem{Ant01}
{\sc Antzoulakos, D.~L.} (2001).
\newblock Waiting times for patterns in a sequence of multistate trials.
\newblock {\em J. Appl. Prob.\/} {\bf 38,} 508--518.

\bibitem{gram-charlier}
{\sc Aroian, L.~A.} (1937).
\newblock {The Type B Gram-Charlier Series}.
\newblock {\em The Ann. of Math. Stat.\/} {\bf 8,} 183--192.

\bibitem{BFWCG00}
{\sc Beaudoing, E., Freier, S., Wyatt, J., Claverie, J.-M. and Gautheret, D.}
  (2000).
\newblock Patterns of variant polyadenylation signal usage in human genes.
\newblock {\em Genome Res.\/} {\bf 10,} 1001--1010.

\bibitem{BeK95}
{\sc Bernardeau, F. and Kofman, L.} (1995).
\newblock Properties of the cosmological density distribution function.
\newblock {\em Astrophys. J.\/} {\bf 443,} 479--498.

\bibitem{edgeworth}
{\sc Blinnikov, S. and Moessner, R.} (1998).
\newblock {Expansions for nearly Gaussian distributions}.
\newblock {\em Astron. Astrophys. Suppl. Ser.\/} {\bf 130,} 193--205.

\bibitem{boeva2}
{\sc Boeva, V., Clement, J., Regnier, M., Roytberg, M. and Makeev, V.} (2007).
\newblock Exact p-value calculation for heterotypic clusters of regulatory
  motifs and its application in computational annotation of cis-regulatory
  modules.
\newblock {\em Algorithms for Molecular Biology\/} {\bf 2,} 13.

\bibitem{boeva}
{\sc Boeva, V., Clément, J., Régnier, M. and Vandenbogaert, M.} (2005).
\newblock Assessing the significance of sets of words.
\newblock In {\em Combinatorial Pattern Matching 05, Lecture Notes in Computer
  Science, vol. 3537}.
\newblock Springer-Verlag.

\bibitem{BJVU98}
{\sc Brazma, A., Jonassen, I., Vilo, J. and Ukkonen, E.} (1998).
\newblock Predicting gene regulatory elements in silico on a genomic scale.
\newblock {\em Genome Res.\/} {\bf 8,} 1202--1215.

\bibitem{Cha05}
{\sc Chang, Y.-M.} (2005).
\newblock Distribution of waiting time until the rth occurrence of a compound
  pattern.
\newblock {\em Statistics and Probability Letters\/} {\bf 75,} 29--38.

\bibitem{Cow91}
{\sc Cowan} (1991).
\newblock Expected frequencies of dna patterns using whittle's formula.
\newblock {\em J. Appl. Prob.\/} {\bf 28,} 886--892.

\bibitem{stefanov}
{\sc Crochemore, M. and Stefanov, V.} (2003).
\newblock Waiting time and complexity for matching patterns with automata.
\newblock {\em Info. Proc. Letters\/} {\bf 87,} 119--125.

\bibitem{DRV01}
{\sc Denise, A., Régnier, M. and Vandenbogaert, M.} (2001).
\newblock Assessing the statistical significance of overrepresented
  oligonucleotides.
\newblock {\em Lecture Notes in Computer Science\/} {\bf 2149,} 85--97.

\bibitem{KBSG99}
{\sc El~Karoui, M., Biaudet, V., Schbath, S. and Gruss, A.} (1999).
\newblock Characteristics of chi distribution on different bacterial genomes.
\newblock {\em Res. Microbiol.\/} {\bf 150,} 579--587.

\bibitem{Erh00}
{\sc Erhardsson, T.} (2000).
\newblock {Compound Poisson approximation for counts of rare patterns in Markov
  chains and extreme sojourns in birth-death chains}.
\newblock {\em Ann. Appl. Probab.\/} {\bf 10,} 573--591.

\bibitem{Spouge02}
{\sc Frith, M.~C., Spouge, J.~L., Hansen, U. and Weng, Z.} (2002).
\newblock Statistical significance of clusters of motifs represented by
  position specific scoring matrices in nucleotide sequences.
\newblock {\em Nucl. Acids. Res.\/} {\bf 30,} 3214--3224.

\bibitem{Fu96}
{\sc Fu, J.~C.} (1996).
\newblock Distribution theory of runs and patterns associated with a sequence
  of multi-state trials.
\newblock {\em Statistica Sinica\/} {\bf 6,} 957--974.

\bibitem{GGSS95}
{\sc Geske, M.~X., Godbole, A.~P., Schaffner, A.~A., Skrolnick, A.~M. and
  Wallstrom, G.~L.} (1995).
\newblock Compound poisson approximations for word patterns under markovian
  hypotheses.
\newblock {\em J. Appl. Probab.\/} {\bf 32,} 877--892.

\bibitem{God91}
{\sc Godbole, A.~P.} (1991).
\newblock Poissons approximations for runs and patterns of rare events.
\newblock {\em Adv. Appl. Prob.\/} {\bf 23,}.

\bibitem{HKB02}
{\sc Hampson, S., Kibler, D. and Baldi, P.} (2002).
\newblock {Distribution patterns of over-represented k-mers in non-coding yeast
  DNA}.
\newblock {\em Bioinformatics\/} {\bf 18,} 513--528.

\bibitem{KBC92}
{\sc Karlin, S., Burge, C. and Campbell, A.} (1992).
\newblock {Statistical analyses of counts and distributions of restriction
  sites in DNA sequences}.
\newblock {\em Nucl. Acids. Res.\/} {\bf 20,} 1363--1370.

\bibitem{KlB97}
{\sc Kleffe, J. and Borodovski, M.} (1997).
\newblock First and second moment of counts of words in random texts generated
  by markov chains.
\newblock {\em Bioinformatics\/} {\bf 8,} 433--441.

\bibitem{Spouge04}
{\sc Leonardo Mariño-Ramírez, John L.~Spouge, G. C.~K. and Landsman, D.}
  (2004).
\newblock Statistical analysis of over-represented words in human promoter
  sequences.
\newblock {\em Nuc. Acids Res.\/} {\bf 32,} 949--958.

\bibitem{lladser}
{\sc Lladser, M.~E.} (2007).
\newblock Mininal markov chain embeddings of pattern problems.
\newblock In {\em Information Theory and Applications Workshop}.
\newblock pp.~251--255.

\bibitem{Lot05}
{\sc Lothaire, M.}, Ed. (2005).
\newblock {\em {Applied Combinatorics on Words}}.
\newblock Cambridge University Press, Cambridge.

\bibitem{nicodeme}
{\sc Nicodème, P., Salvy, B. and Flajolet, P.} (2002).
\newblock Motif statistics.
\newblock {\em Theoretical Com. Sci.\/} {\bf 287,} 593--617.

\bibitem{Nue04}
{\sc Nuel, G.} (2004).
\newblock Ld-spatt: Large deviations statistics for patterns on markov chains.
\newblock {\em J. Comp. Biol.\/} {\bf 11,} 1023--1033.

\bibitem{Nue06}
{\sc Nuel, G.} (2006).
\newblock {Effective p-value computations using Finite Markov Chain Imbedding
  (FMCI): application to local score and to pattern statistics}.
\newblock {\em Algorithms for Molecular Biology\/} {\bf 1,} 5.

\bibitem{nuel}
{\sc Nuel, G.} (2006).
\newblock Numerical solutions for patterns statistics on markov chains.
\newblock {\em Stat. App. in Genet. and Mol. Biol.\/} {\bf 5,} 26.

\bibitem{Nue08}
{\sc Nuel, G.} (2008).
\newblock {Pattern Markov chains: optimal Markov chain embedding through
  deterministic finite automata}.
\newblock {\em J. of Applied Prob.\/} {\bf 45,} 226--243.

\bibitem{PBM89}
{\sc Pevzner, P., Borodovski, M. and Mironov, A.} (1989).
\newblock Linguistic of nucleotide sequences: The significance of deviation
  from mean statistical characteristics and prediction of frequencies of
  occurrence of words.
\newblock {\em J. Biomol. Struct. Dyn.\/} {\bf 6,} 1013--1026.

\bibitem{Pru95}
{\sc Prum, B., Rodolphe, F. and de~Turckheim, E.} (1995).
\newblock Finding words with unexpected frequencies in dna sequences.
\newblock {\em J. R. Statist. Soc. B\/} {\bf 11,} 190--192.

\bibitem{reignier}
{\sc Reignier, M.} (2000).
\newblock A unified approach to word occurrences probabilities.
\newblock {\em Discrete Applied Mathematics\/} {\bf 104,} 259--280.

\bibitem{ReS99}
{\sc Reinert, G. and Schbath, S.} (1999).
\newblock Compound poisson and poisson process approximations for occurrences
  of multiple words in markov chains.
\newblock {\em J. of Comp. Biol.\/} {\bf 5,} 223--254.

\bibitem{ribeca}
{\sc Ribeca, P. and Raineri, E.} (2008).
\newblock {Faster exact Markovian probability functions for motif occurrences:
  a DFA-only approach}.
\newblock {\em Bioinformatics\/} {\bf 24,} 2839--2848.

\bibitem{StP97}
{\sc Stefanov, V. and Pakes, A.~G.} (1997).
\newblock Explicit distributional results in pattern formation.
\newblock {\em Ann. Appl. Probab.\/} {\bf 7,} 666--678.

\bibitem{StS07}
{\sc Stefanov, V.~T. and Szpankowski, W.} (2007).
\newblock {Waiting Time Distributions for Pattern Occurrence in a Constrained
  Sequence}.
\newblock {\em Discrete Mathematics and Theoretical Computer Science\/} {\bf
  9,} 305--320.

\bibitem{Hel98}
{\sc van Helden, J., André, B. and Collado-Vides, J.} (1998).
\newblock Extracting regulatory sites from the upstream region of yeast genes
  by computational analysis of oligonucleotide frequencies.
\newblock {\em J. Mol. Biol.\/} {\bf 281,} 827--842.

\end{thebibliography}

\end{document}